\documentclass{amsart}
\usepackage{graphicx} 
\usepackage{amsmath}
\usepackage{amssymb}
\usepackage{amsthm}
\usepackage{float}
\usepackage[backend=bibtex,
style=numeric]{biblatex}
\usepackage{abstract}
\newcommand{\R}{\mathbb{R}} 
\newcommand{\Z}{\mathbb{Z}} 
 
\DeclareMathOperator{\supp}{supp}        
\DeclareMathOperator{\dist}{dist}        
\DeclareMathOperator{\WF}{WF}            
\DeclareMathOperator{\esssupp}{ess\,supp} 
\DeclareMathOperator{\Vol}{Vol}          
 
\newtheorem{theorem}{Theorem}[section]

\newtheorem{lemma}[theorem]{Lemma}
\newtheorem{proposition}[theorem]{Proposition}

\title{Refinements to the Kuznetsov Formula for Laplace--Beltrami Operators}
\author{Merrick S Chang}
\date{February 2026}
 
\begin{document}
 
\maketitle
 
\begin{abstract}
Wyman and Xi (2023) proved a Kuznetsov-type theorem for the submanifold counting function associated with Laplace--Beltrami eigenvalues. We refine the results of Wyman and Xi, improving their asymptotic: we remove a constant in the codimension-one case, and we prove a new asymptotic in the case that the set of looping times has measure zero.
\end{abstract}
 
\section{Introduction}\label{sec:intro}

The spectral asymptotics of the Laplace--Beltrami operator on a compact
Riemannian manifold $(M,g)$ have been studied since Weyl's foundational work on the distribution of eigenvalues \cite{weyl1911}. Writing \[0 = \lambda_0^2 \le \lambda_1^2 \le \cdots\] for the spectrum of $-\Delta_g$, Weyl's law states that the eigenvalue counting function satisfies \[N(\lambda) = \#\{j : \lambda_j \le \lambda\} \sim (2\pi)^{-n} \omega_n \mathrm{Vol}_g(M)\, \lambda^n,\] where $n = \dim M$ and $\omega_n$ is the volume of the unit ball in $\R^n$. Weyl further conjectured that
$N(\lambda) = c\lambda^n + O(\lambda^{n-1})$;
the size of the remaining lower order terms, and the geometry they encode, has driven much of the subject. Hörmander \cite{hormander1968}, building on Avakumovi\'c \cite{avakumovic1956} and Levitan \cite{levitan1952}, proved this result affirmatively through a careful analysis of the short-time wave propagator. A central insight of Duistermaat and Guillemin \cite{six} (see also Ivrii \cite{seven}) is that this remainder improves to $o(\lambda^{n-1})$ precisely when the periodic geodesics form a measure-zero subset of the cotangent bundle: spectral asymptotics are governed by the dynamics of the geodesic flow. This principle- that a measure-zero hypothesis on a distinguished set of orbits upgrades a remainder estimate- is the template on which the present work is patterned.

A parallel theory concerns not the eigenvalues but the concentration of the
eigenfunctions $e_j$, measured here by their averages over a submanifold. For a
closed embedded submanifold $H \subseteq M$ of dimension $d$, the period
integral $\int_H e_j \, dV_H$ quantifies the oscillation of $e_j$ along $H$.
Such quantities first arose in analytic number theory, where the Kuznetsov sum
formula \cite{kuznetsov1980}, together with the closely related work of
Bruggeman \cite{bruggeman1978}, related weighted sums of Fourier coefficients of
automorphic forms to sums of Kloosterman sums. Zelditch \cite{five} recast these ideas in
the language of microlocal analysis, proving for an arbitrary compact Riemannian
manifold the Kuznetsov sum formula
\[N_H(\lambda) := \sum_{\lambda_j \le \lambda} \left| \int_H e_j \, dV_H
\right|^2 = C_{H,M}\, \lambda^{n-d} + O(\lambda^{n-d-1}),\]
in which the leading growth is governed by the codimension $n - d$. This
one-term asymptotic is, for period integrals, the analogue of Weyl's law for eigenvalues.

Wyman and Xi \cite{one} provide results which unify the estimates  of Duistermaat and Guillemin and Zelditch, establishing a two-term Kuznetsov sum formula which reveals a lower-order oscillatory term, dependent on the geodesic flows. We relay two statements sharpening their work. Write $\dot{N}^*H := N^*H \setminus 0$ for the conormal bundle of $H$ in $M$ with its zero section removed, and let $\Phi_t$ denote the homogeneous geodesic (Hamiltonian) flow on $T^*M \setminus 0$. Following \cite{one}, let \[\mathcal{T}_0 = \left\{ t \in \R ~\middle|~ \Phi_t(x,\xi) \in \dot{N}^*H \textrm{ for some } (x,\xi) \in \dot{N}^*H\right\} \cup \{0\}.\] Set
\[\Sigma_t = \left\{(x,\xi) \in \dot{N}^*H ~\middle|~ \Phi_t(x, \xi) \in \dot{N}^*H \textrm{ and } d\Phi_t \textrm{ is a linear isomorphism}\right\}\]
denote by $S\Sigma_t$ and $SN^*H$ the unit bundles of $\Sigma_t$ and $N^*H$ respectively. Wyman and Xi (2023) show the following:
\begin{proposition}\label{prop:wx-measure0}
If $S\Sigma_t$ is a measure-zero subset of $SN^*H$ for each $t \in \mathcal{T}_0$, and $\mathcal{T}_0$ is countable, then
\[N_H(\lambda) = C_{H,M}\, \lambda^{n-d} + o(\lambda^{n-d-1}) + C.\]
\end{proposition}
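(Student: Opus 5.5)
This is Wyman and Xi's result, and I would recover it by the standard Duistermaat--Guillemin / Ivrii scheme, applied to the period integrals instead of the diagonal trace.

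\textbf{Step 1: pass to a Fourier-transformed quantity.} Write
\[N_H'(\lambda) = \sum_j |\int_H e_j\, dV_H|^2\, \delta(\lambda-\lambda_j)\]
(symmetrized in $\pm\lambda_j$). Its Fourier transform in $\lambda$ is the distribution
\[\widehat{N_H'}(t) = \int_H\int_H \cos(t\sqrt{-\Delta_g})(x,y)\, dV_H(x)\, dV_H(y),\]
which is the pushforward of the half-wave kernel by the restriction to $H\times H$. By H\"ormander's calculus of wavefront sets, $\widehat{N_H'}$ is smooth away from the set of $t$ for which some conormal covector is carried to a conormal covector. That set is $\mathcal{T}_0$.

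\textbf{Step 2: the singularity at $t=0$.} Near $t=0$ the singularity of $\widehat{N_H'}$ is a conormal-type distribution whose symbol expansion produces the smooth main term $C_{H,M}\lambda^{n-d}$ plus lower-order terms. Take $\rho$ with $\hat\rho$ supported near $0$ and $\hat\rho\equiv1$ near $0$. A Tauberian lemma (Zelditch, Wyman--Xi) then gives
\[N_H(\lambda) = \rho*N_H(\lambda) + O(\lambda^{n-d-1}),\]
with $\rho*N_H(\lambda) = C_{H,M}\lambda^{n-d} + C + o(\lambda^{n-d-1})$. Here the constant $C$ arises from the $\lambda^0$ term of the expansion, which matters in codimension one.

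\textbf{Step 3: improve the remainder to $o(\lambda^{n-d-1})$ with a long-time Tauberian argument.} Fix $T$ large and choose $\hat\rho_T$ supported in $(-T,T)$. It suffices to show that
\[\big((\hat\rho_T - \hat\rho)\,\widehat{N_H'}\big)^\vee(\lambda) = o(\lambda^{n-d-1})\quad\text{in the appropriate averaged sense,}\]
with the implied small constant tending to zero as $\varepsilon\to0$. The Tauberian theorem then upgrades the error from $O(\lambda^{n-d-1}/T)$, and letting $T\to\infty$ gives $o$.

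For each $t_0\in\mathcal{T}_0\cap[-T,T]$, microlocally split $\dot N^*H$ using a zeroth-order cutoff $B$:
\begin{itemize}
\item[(a)] a conic neighborhood of $S\Sigma_{t_0}$. Since $S\Sigma_{t_0}$ has measure zero, this neighborhood can be taken to have measure $<\varepsilon$. Its contribution is bounded by the trivial estimate (the local Weyl-type bound for pieces of conormal mass) by $\varepsilon\lambda^{n-d-1}$.
\item[(b)] the complement. There the returning covectors are degenerate, meaning $d\Phi_{t_0}$ is not an isomorphism, or they do not return at all. A stationary phase / clean-intersection analysis shows the contribution near $t_0$ is $o(\lambda^{n-d-1})$: the degeneracy directions lose a power of $\lambda$ relative to the nondegenerate Lagrangian-intersection count.
\end{itemize}

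\textbf{Step 4: sum over $t_0$.} Countability of $\mathcal{T}_0$ lets me exhaust $\mathcal{T}_0\cap[-T,T]$ by finitely many times up to a set that is handled by a small-measure cutoff, in the style of Ivrii's measure argument. More precisely, I would instead prove directly that the set of covectors in $SN^*H$ returning to $N^*H$ nondegenerately at some time in $[-T,T]$ has measure zero, as a countable union of null sets. Then a single cutoff of measure $<\varepsilon$ handles all $t_0$ simultaneously.

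\textbf{Main obstacle.} The hardest step is 3(b): showing that degenerate returns contribute only $o(\lambda^{n-d-1})$ without any cleanliness assumption. My first attempt would use the ``non-focusing'' argument: use the microlocal bound on $\int_H e_j$ for $(\lambda_j\in[\lambda,\lambda+1])$ restricted to non-looping directions, and avoid stationary phase altogether by a Sogge--Zelditch style dynamical covering argument.
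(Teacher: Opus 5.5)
Your overall architecture matches what the paper uses in Section~\ref{sec:measure0} to prove the stronger Theorem~\ref{thm:measure0}: a Tauberian reduction to times away from $0$, a zeroth-order microlocal splitting of $N^*H$, and the $L^2$-symbol bound for the small piece. The paper itself only quotes this proposition from Wyman--Xi. There is, however, a genuine gap at Step~3(b).

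\textbf{Step 3(b).} There you need that covectors which return to $N^*H$ at time $t_0$ but lie outside $\Sigma_{t_0}$ contribute $o(\lambda^{n-d-1})$, and you appeal to a stationary phase / clean-intersection analysis. Nothing in the hypotheses gives cleanliness. The set of such returns need not be a manifold, and $\Phi_{t_0}(\dot N^*H)\cap \dot N^*H$ need not be clean. So there is no expansion to invoke, and nothing produces a uniform loss of a power of $\lambda$. Since your cutoff $B$ still sees these returning covectors, the wavefront-set argument no longer makes the $B$-terms negligible, and everything rests on 3(b).

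\textbf{Step 4.} There is a related defect here. A zeroth-order symbol equal to $1$ on a set equals $1$ on its closure. The set $\bigcup_{t_0} S\Sigma_{t_0}$ is null but need not be closed. So countability alone does not give you a single cutoff $b$ of small $L^2$ symbol mass.

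\textbf{The missing idea.} The returns are negligible by measure, not by a power gain, so no dynamical analysis of them is needed.
\begin{enumerate}
\item Take $\hat\rho$ supported in $[\delta,T]$ with $\delta>0$, and remove all of $\mathcal{L}_H([\delta,T])$.
\item This set is compact: it is the projection of the closed set $\{(p,t)\in SN^*H\times[\delta,T]:\Phi_t(p)\in SN^*H\}$.
\item Under the Wyman--Xi hypotheses it is null. It equals the countable union over $t\in\mathcal{T}_0\cap[\delta,T]$ of the closed sets $L_t=SN^*H\cap\Phi_t^{-1}(SN^*H)$. So it suffices that each $L_t$ is null.
\item If the condition on $d\Phi_t$ in the definition of $\Sigma_t$ is read literally, then $S\Sigma_t=L_t$ and there is nothing to prove.
\item If it is read as ``$d\Phi_t$ carries $TN^*H$ onto $TN^*H$'', argue at a Lebesgue density point $p$ of $L_t$. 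Take local defining functions $g$ of $SN^*H$ in $S^*M$ near $\Phi_t(p)$; then $g\circ\Phi_t$ vanishes on $L_t$. If some component had nonzero differential along $SN^*H$ at $p$, its zero set near $p$ would be a hypersurface, contradicting positive density. Hence $d\Phi_t(T_pSN^*H)\subset T_{\Phi_t(p)}SN^*H$, with equality by dimension, and by homogeneity $p\in S\Sigma_t$. So $L_t\setminus S\Sigma_t$ is null.
\item With $\mathcal{L}_H([\delta,T])$ a compact null set, Proposition~\ref{prop:bB} gives $b+B=I$ with $\esssupp B$ disjoint from it.
\item The three terms containing $B$ are $O(\lambda^{-\infty})$ by propagation of singularities and Proposition~\ref{prop:wf-smooth}.
\item The $b\,X_\lambda\, b^*$ term is at most $C\epsilon\lambda^{n-d-1}+C_b\lambda^{n-d-2}$ by Proposition~\ref{prop:wyman-symbol}.
\item Propositions~\ref{prop:levitin} and~\ref{prop:wx-main} then finish the proof.
\end{enumerate}
This is essentially the non-focusing fallback you mention at the end. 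It works only after you cut out every looping direction in the time window, not just $\Sigma_{t_0}$. Once you do, countability of $\mathcal{T}_0$ is used only to show that this set is null, which is why Theorem~\ref{thm:measure0} can drop that hypothesis.
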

\noindent cf. the celebrated Duistermaat--Guillemin theorem \cite{six}, which gives Weyl's law improvements in the case that the set of covectors belonging to closed orbits of the geodesic flow is measure zero; their technique leaves the constant $C$ in the case $n-d = 1$ but they conjecture, based on empirical observation of the case $M = T^2$ and $H = S^1$, that it is superfluous. We ran additional tests in the case of $M$ a sphere and $H$ a family of longitudinal circles, which seemed to offer additional evidence that the constant is removable.
\begin{figure}[H]
    \centering
    \includegraphics[width=0.9\linewidth]{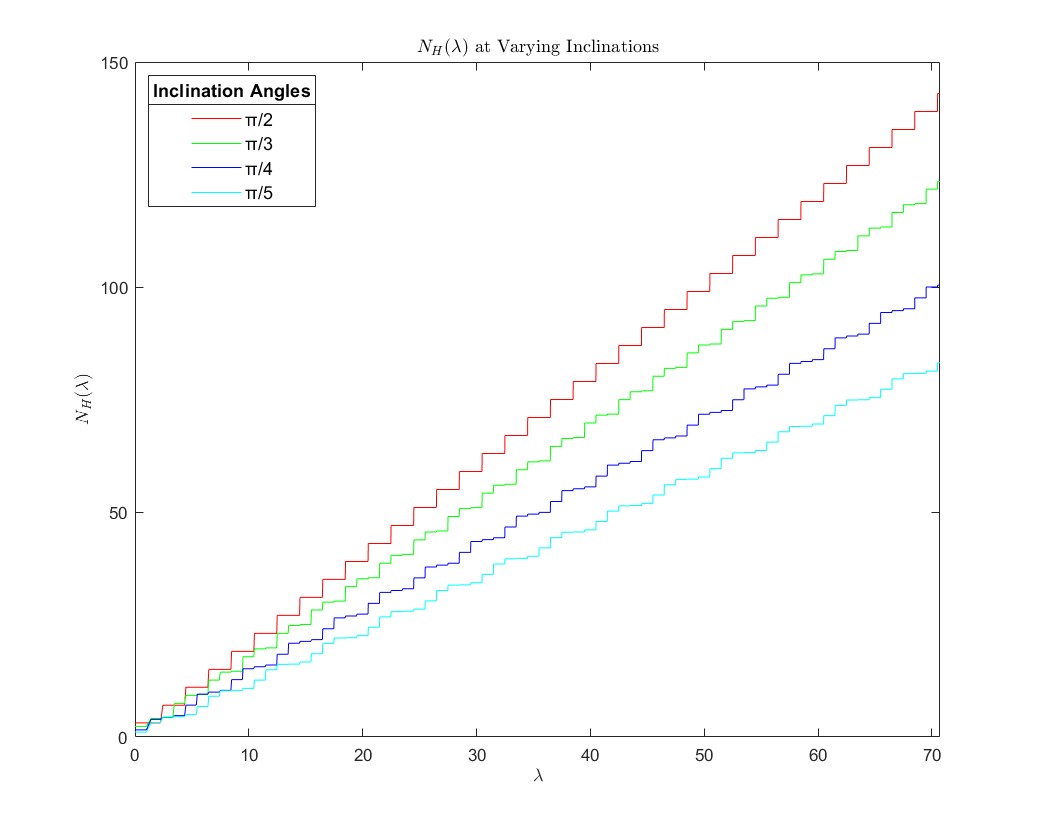}
    \caption{The functions $N_H(\lambda)$ for longitudinal circles $H$ of various inclinations. Visibly, the $y$-intercepts of the linear term seem to be $0$.}
    \label{fig:longitudinal}
\end{figure}
Indeed, our first result is to prove the following in Section~\ref{sec:codim1} through use of the initial result combined with some short-time heat kernel asymptotics:
\begin{theorem}\label{thm:codim1}
Let $(M,g)$ be a compact Riemannian manifold and $H$ an embedded compact submanifold with $\dim H = \dim M - 1$. Assume that
\[N_H(\lambda)=  C_{H,M}\, \lambda + C + o(1).\]
Then
    \[N_H(\lambda)=  C_{H,M}\, \lambda + o(1).\]
\end{theorem}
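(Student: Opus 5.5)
We will identify the constant $C$ by comparing two asymptotic expansions of the Laplace transform of the measure $dN_H$. The first expansion comes from the hypothesis. The second comes from the short-time heat kernel restricted to $H\times H$. Set
\[
Z_H(t) := \sum_j e^{-t\lambda_j^2}\left|\int_H e_j\,dV_H\right|^2 = \int_0^\infty e^{-t\lambda^2}\,dN_H(\lambda) = \iint_{H\times H} K_t(x,y)\,dV_H(x)\,dV_H(y),
\]
where $K_t$ is the heat kernel of $M$. Note that $N_H$ jumps at $\lambda=0$: the constant eigenfunction $e_0=\Vol(M)^{-1/2}$ contributes $N_H(0)=\Vol(H)^2/\Vol(M)$. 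We must therefore keep track of whether $N_H(\lambda)$ for $\lambda\ge 0$ includes this atom, and the Stieltjes integral must start at $0^-$.

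\textbf{Step 1: Abelian side.} We insert $N_H(\lambda)=C_{H,M}\lambda+C+r(\lambda)$ with $r(\lambda)=o(1)$ as $\lambda\to\infty$. Integrating by parts,
\[
Z_H(t)=\int_{0^-}^\infty 2t\lambda e^{-t\lambda^2}N_H(\lambda)\,d\lambda = C_{H,M}\frac{\sqrt{\pi}}{2}t^{-1/2} + C + \int_0^\infty 2t\lambda e^{-t\lambda^2} r(\lambda)\,d\lambda .
\]
The last term is $o(1)$ as $t\to0^+$. To see this, substitute $\lambda=s/\sqrt t$: the integral becomes $\int_0^\infty 2s e^{-s^2} r(s/\sqrt t)\,ds$. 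The function $r$ is bounded on compact sets, since $N_H$ is monotone and finite there, and $r(s/\sqrt t)\to0$ pointwise as $t\to0^+$, so dominated convergence applies. Hence
\[
Z_H(t)=a\,t^{-1/2}+C+o(1), \qquad a=\tfrac{\sqrt\pi}{2}C_{H,M}.
\]

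\textbf{Step 2: heat kernel side.} It remains to show that the double integral of $K_t$ over $H\times H$ has expansion $a\,t^{-1/2}+0\cdot t^0+o(1)$. That is, the constant term must vanish.

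We use the Minakshisundaram--Pleijel parametrix $K_t(x,y)=(4\pi t)^{-n/2}e^{-d(x,y)^2/4t}\bigl(u_0(x,y)+t\,u_1(x,y)+O(t^2)\bigr)$, with $u_0(x,x)=1$. Off a neighborhood of the diagonal the contribution is $O(t^\infty)$.

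Near the diagonal we work in Fermi coordinates. For $x\in H$, write $y=\exp_x(v)$ with $v\in T_xH$, $|v|$ small. With $n-d=1$, the factor $(4\pi t)^{-n/2}$ against a Gaussian integral over the $d=n-1$ tangential variables leaves $(4\pi t)^{-1/2}$ times a power series in $t^{1/2}$. The leading coefficient is $(4\pi)^{-1/2}\Vol(H)$, so $a=(4\pi)^{-1/2}\Vol(H)$. This is consistent with the known constant $C_{H,M}=\Vol(H)/\pi$ for hypersurfaces.

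The $t^0$ coefficient comes from the half-integer order terms in the stationary phase expansion of
\[
\int_{T_xH}e^{-d(x,\exp_x v)^2/4t}\,u_0\,J(v)\,dv,
\]
where $J$ is the Jacobian of the induced measure. Every correction term is a Taylor coefficient in $v$. These include $d^2=|v|^2+O(|v|^4)$ (the extrinsic curvature of $H$ enters only at fourth order, since $d_M\le d_H$ agree to second order), together with $u_0$ and $J$. All are even in $v$ or come with integer powers of $t$. By the parity of the Gaussian, only integer powers of $t$ multiply $t^{-d/2}$. The total expansion is therefore $t^{-1/2}(c_0+c_1t+\dots)$, and there is no $t^0$ term.

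\textbf{Step 3: the main obstacle.} The delicate point is making the parity argument rigorous. We must check that odd-order Taylor terms of the phase correction and amplitude integrate to zero. We expand $e^{-(d^2-|v|^2)/4t}$, with $d^2-|v|^2=O(|v|^3)$ containing cubic terms, and verify that each monomial $t^{-k}v^\alpha$ with $|\alpha|$ odd vanishes upon Gaussian integration. Every surviving term has even total degree and yields $t^{\text{integer}}$. So the expansion lies in $t^{-1/2}\mathbb{Z}$-powers only. We also confirm the remainder is $O(t^{1/2})$, which is enough.

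\textbf{Step 4: conclusion.} Comparing the two expansions gives
\[
C = \lim_{t\to0^+}\bigl(Z_H(t)-a\,t^{-1/2}\bigr).
\]
By Step 2 this limit is $0$, so $C=0$. This requires handling the $\lambda=0$ atom consistently with the convention defining $N_H$. Since the hypothesis already asserts the expansion for $N_H$ as defined, the atom is simply part of $N_H$ and needs no separate treatment. We conclude $N_H(\lambda)=C_{H,M}\lambda+o(1)$.
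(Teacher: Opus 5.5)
Your proposal is correct and follows the same route as the paper. Both arguments integrate by parts to turn the hypothesis into $\int e^{-t\lambda^2}\,dN_H=\tfrac{\sqrt{\pi}}{2}C_{H,M}t^{-1/2}+C+o(1)$, and then use the local short-time heat-kernel expansion to show that $\iint_{H\times H}e^{t\Delta_g}\,dV_H\,dV_H$ has no $t^0$ term. The only difference is in that local step: you kill the $t^0$ term by Gaussian parity in the Laplace method (only integer powers of $t$ multiply $t^{-d/2}$), whereas the paper uses $M$-normal coordinates with $H$ written as a graph and bounds each deviation from the flat Gaussian directly. The paper's route implicitly needs every first-order correction, such as $H_0(x,y)-1$, to vanish to second order, and your parity argument does not need that.
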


A relatively minor note is that the compactness assumption on $H$ can likely be essentially removed. As the integral $\left|\int_H e_j dV_H \right|^2$ need not be defined it would need to be replaced with the integral $\left|\int_H \chi e_j dV_H \right|^2$ for some compactly supported function $\chi$ on $H$, but otherwise compactness of $H$ is not a direct requirement.

Second, we substantially weaken the hypothesis of Proposition \ref{prop:wx-measure0} in Section 3:
\begin{theorem}\label{thm:measure0}
Assume that the set of looping directions,
\[\left\{(x, \xi) \in S\dot{N}^*H ~\middle|~ \Phi_t(x, \xi) \in S\dot{N}^*H \textrm{ for some } t>0\right\},\]
is measure zero in $SN^*H$. Then
\[N_H(\lambda) = C_{H,M}\, \lambda^{n-d} + o(\lambda^{n-d-1}).\]
\end{theorem}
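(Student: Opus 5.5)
The strategy follows the Duistermaat--Guillemin/Ivrii template. We replace the hypothesis on each individual $\Sigma_t$ by an approximation argument: the looping set is small, so we can cut it out microlocally. Throughout, let $\rho\in\mathcal{S}(\R)$ be even with $\hat\rho\in C_c^\infty(-1,1)$ and $\hat\rho(0)=1$, and set $\rho_T(\lambda)=T\rho(T\lambda)$.

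The starting point is the standard reduction used by Zelditch and by Wyman--Xi. Write $dN_H=\sum_j |\int_H e_j|^2\,\delta_{\lambda_j}$. The smoothed quantity $\rho_T*dN_H(\lambda)$ is the Fourier transform in $t$ of $\hat\rho(t/T)\,\langle e^{-it\sqrt{-\Delta}}\delta_H,\delta_H\rangle$. A Tauberian lemma (the version used in \cite{one}) says that
\[N_H(\lambda)=C_{H,M}\lambda^{n-d}+o(\lambda^{n-d-1})\]
follows if, for every fixed $T$,
\[\rho_T*dN_H(\lambda)=(n-d)C_{H,M}\lambda^{n-d-1}+R_T(\lambda)\quad\text{with}\quad\limsup_{\lambda\to\infty}\lambda^{-(n-d-1)}|R_T(\lambda)|\le \epsilon(T)\to0\ \text{as } T\to\infty.\]
The singularity at $t=0$ produces exactly the main term, the Zelditch asymptotic. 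So everything reduces to showing that the contribution of the times $0<|t|<T$ is $o(\lambda^{n-d-1})$, up to an error that vanishes as $T\to\infty$.

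Fix $T$ and $\epsilon>0$. The key step is a microlocal decomposition on $SN^*H$, replacing the per-$t$ hypotheses on $\Sigma_t$ and $\mathcal{T}_0$. Let $L_T$ be the set of $(x,\xi)\in SN^*H$ with $\Phi_t(x,\xi)\in SN^*H$ for some $t\in(0,T]$. By compactness and continuity of the flow, $L_T$ is closed, and it has measure zero by hypothesis. Choose an open neighbourhood $U\supset L_T$ in $SN^*H$ of measure $<\epsilon$. Then choose a zeroth-order pseudodifferential cutoff $B$ on $M$ (or a cutoff acting on the conormal distribution $\delta_H$) with principal symbol $b$ such that $b=1$ near $L_T$, $\supp b\cap SN^*H\subset U$, and $0\le b\le1$. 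Split $\delta_H=B\delta_H+(I-B)\delta_H$.

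The cross and "good" terms are handled as follows. On $\supp(1-b)$ no conormal direction returns to $N^*H$ within time $[\delta,T]$. The wavefront set of $e^{-it\sqrt{-\Delta}}(I-B)\delta_H$ is disjoint from $N^*H$ for those $t$. Hence $t\mapsto\hat\rho(t/T)\langle e^{-it\sqrt{-\Delta}}(I-B)\delta_H,\delta_H\rangle$ is smooth on $\{\delta\le|t|\le T\}$, and the same holds with $(I-B)$ on either side. Its Fourier transform in $t$ is therefore $O(\lambda^{-\infty})$ there.

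The remaining piece $\langle e^{-it\sqrt{-\Delta}}B\delta_H,B\delta_H\rangle$ is bounded crudely rather than exactly. By Cauchy--Schwarz/positivity, the smoothed spectral measure of $B\delta_H$, with $\rho\ge0$ arranged, is dominated by the $T$-independent local Weyl-type bound
\[\rho_T*dN_{B\delta_H}(\lambda)\le C\Big(\int_{SN^*H}|b|^2\Big)\lambda^{n-d-1}+O_T(\lambda^{n-d-2}).\]
Here the coefficient $C$ is uniform in $T$, as in Hörmander's argument: the bound comes from the $t=0$ singularity of a nonnegative measure, using $\hat\rho$ supported in $(-1,1)$ applied at scale 1. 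The right side is $\le C\epsilon\lambda^{n-d-1}$. Letting $\epsilon\to0$ and then $T\to\infty$ gives the required $\epsilon(T)$, and the Tauberian step finishes the proof.

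The main obstacle is the uniformity in this last step. The error from $B\delta_H$ must be controlled by $\epsilon\lambda^{n-d-1}$ with a constant independent of $T$; the $T$-dependence may only enter lower-order terms. It also requires that the short-time window $|t|<\delta$ near $0$ contribute only the main term, not extra $\lambda^{n-d-1}$ pieces from directions that return at arbitrarily small times. I would handle the latter by noting that for an embedded $H$, conormal geodesics cannot return to $N^*H$ in time less than some $\delta_0>0$. I would handle the former by working with the positive measure $dN_{B\delta_H}$ and applying the unsmoothed-to-smoothed comparison with a fixed-scale test function, exactly as in Ivrii's proof of the two-term Weyl law.
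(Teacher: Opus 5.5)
Your microlocal core matches the paper's proof of Lemma~\ref{lem:second-cond}. You split off a piece whose essential support misses the directions looping in $[\delta,T]$, which propagation of singularities kills. You control the remaining piece, which has small symbol norm on $SN^*H$, by the unit-window bound of Proposition~\ref{prop:wyman-symbol}. The genuine gap is codimension one. There the statement asserts $N_H(\lambda)=C_{H,M}\lambda+o(1)$, and the large-$\lambda$ behaviour of smoothed spectral measures cannot detect an additive constant: replacing $N$ by $N+C\,\mathbf{1}_{[0,\infty)}$ changes $\rho*dN(\lambda)$ only by $C\rho(\lambda)=O(\lambda^{-\infty})$. So your Tauberian lemma is false when $n-d=1$. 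For instance, $N(\lambda)=C_{H,M}\lambda+C$ for $\lambda\ge 0$ satisfies its hypothesis with $R_T=O(\lambda^{-\infty})$. Your argument can therefore only give $N_H(\lambda)=C_{H,M}\lambda+C+o(1)$, which is exactly why Proposition~\ref{prop:wx-measure0} of Wyman--Xi retains the constant. The missing idea is a global input that fixes $C$, and the paper supplies it in Theorem~\ref{thm:codim1}. Integrating by parts, $N_H=C_{H,M}\lambda+C+o(1)$ gives $\int e^{-t\lambda^2}\,dN_H(\lambda)=\tfrac{\sqrt{\pi}}{2}C_{H,M}t^{-1/2}+C+o(1)$ as $t\to 0$. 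This quantity equals $\int_H\int_H e^{t\Delta_g}(x,y)\,dV_H(x)\,dV_H(y)$, whose short-time heat kernel expansion has no $t^0$ term, so $C=0$. You need this step, or some other argument determining the constant, to cover $n-d=1$.

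Two further points are repairable when $n-d\ge 2$. First, even there your lemma is too weak as stated. Write $\nu=n-d-1$; an error $o(\lambda^{\nu})$ in $\rho_T*dN_H$ integrates only to $o(\lambda^{\nu+1})$ in $N_H$. Indeed $N(\lambda)=C_{H,M}\lambda^{\nu+1}+D\lambda^{\nu}$ with $D>0$ satisfies your hypothesis with $R_T=O(\lambda^{\nu-1})$. What is actually needed is twofold: the $t=0$ contribution with remainder $O(\lambda^{\nu-2})$ (Proposition~\ref{prop:wx-main}), and $N_H'*\psi=o(\lambda^{\nu})$ for every $\psi$ whose Fourier transform is compactly supported away from $0$. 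The latter is the hypothesis of the Levitin-type theorem (Proposition~\ref{prop:levitin}) that the paper uses. Your decomposition does supply it, since it applies equally to $\hat\psi=\chi_1(t)/(it)$, where $\chi_1$ is the part of $\hat\rho(t/T)$ supported in $\delta\le|t|\le T$; this is what appears after integrating in $\lambda$.

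Second, the $T$-uniform bound you single out as the main obstacle is not what a fixed-scale argument gives. Passing from unit windows to the height-$T$ kernel $\rho_T$ costs a factor of $T$. For symbols concentrated near looping directions, returns before time $T$ can genuinely contribute on the order of $T\int|b|^2\,\lambda^{\nu}$. But uniformity is not needed, because $\epsilon$ is chosen after $T$. The bound $C_T\,\epsilon\,\lambda^{\nu}+C_b\lambda^{\nu-1}$, obtained by dominating the kernel by a step function as the paper does, already yields $o(\lambda^{\nu})$.
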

\noindent In general the assumption that the set of looping times is measure zero is the weakest possible to get an asymptotic of this type in the literature; in particular, we have eliminated the hypothesis that $\mathcal{T}_0$ be countable. Our proof closely follows the arguments of Wyman (2017), which in turn borrow from Ivrii \cite{seven}. Note that through use of Theorem \ref{thm:codim1} we have eliminated any constant term in the $\mathrm{codim} H = 1$ case.

\section{Proof of Theorem~\ref{thm:codim1}}\label{sec:codim1}
 
\subsection{Setup}
By assumption,
\begin{align*}\int_{-\infty}^\infty e^{-t\lambda^2}\, dN_H(\lambda) &= \int_{-\infty}^\infty 2t\lambda\, e^{-t\lambda^2} N_H(\lambda)\, d\lambda \\&= \int_{0}^\infty 2t\, e^{-t\lambda^2}\left(C_{H,M}\lambda^2 + C \lambda + r(\lambda)\lambda\right) d\lambda
\end{align*}
after integration by parts, where $r(\lambda) = o(1)$. Now
\[\int_0^\infty 2t\, e^{-t\lambda^2}\left(C_{H,M} \lambda^2 + C\lambda\right) d\lambda = 2t\left(\frac{C_{H,M}\sqrt{\pi}}{4t^{3/2}} + \frac{C}{2t}\right) = \frac{C_{H,M}\sqrt{\pi}}{2t^{1/2}} + C.\]
Additionally, for sufficiently large $x$ and arbitrarily small $c$,
\begin{align*}
2\int_{0}^\infty \lambda t\, e^{-t\lambda^2} r(\lambda)\lambda \, d\lambda
&\leq 2\int_0^x \alpha \lambda t\, e^{-t\lambda^2}\, d\lambda + 2\int_x^\infty c \lambda t\, e^{-t\lambda^2}\, d\lambda\\
&= c\, e^{-tx^2}-\alpha e^{-tx^2} + \alpha,
\end{align*}
where $\alpha = \sup_{\lambda \in [0,x]} r(\lambda)$. This expression approaches $c$ as $t\to 0$, but because the choice of $c$ is arbitrary we are done. Thus it suffices to show that
\begin{align*}
\int_{-\infty}^\infty e^{-t\lambda^2}\, dN_H(\lambda) &= \lim_{\lambda \to \infty} \sum_{\lambda_j \leq \lambda} e^{-t\lambda_j^2} \int_H\int_H e_j(x)\overline{e_j(y)}\, dV_H(x)\, dV_H(y)\\
&=\int_H\int_H \lim_{\lambda \to \infty} \sum_{\lambda_j \leq \lambda }  e^{-t\lambda_j^2} e_j(x)\overline{e_j(y)}\, dV_H(x)\, dV_H(y)\\
&=\int_H \int_H e^{t\Delta_g}(x,y)\, dV_H(x)\, dV_H(y)\\&= C_{H,M} t^{-1/2} +\text{vanishing terms}
\end{align*}
as $t \to 0$.
 
\subsection{Local Asymptotics}
 
The following short-time asymptotics are from \cite{three}. Given a Riemannian manifold $M$ of dimension $n$, the heat kernel $p_M(t, x,y)$ uniformly approaches
\[\left( \frac{1}{2\pi t} \right)^{n/2} e^{-\dist(x,y)^2/2t} \sum_{m=0}^{\infty} H_m(x,y)\, t^m\]
save for the cut locus (i.e. the set of points $(x,y)$ such that there is more than one geodesic connecting $x$ and $y$), where each $H_m(x,y)$ is a smooth function with $H_0(x,y) > 0$ and $H_0(x,x) = 1$.
 
Note that in general the heat kernel behaves poorly at the cut locus. In the compact case, however, we have the following additional bound globally:
\[p_M(t,x,y) \leq \frac{C}{t^{(2n-1)/2}}\, e^{-\dist(x,y)^2/2t},\]
so we may comfortably ignore this bad behavior and focus on the neighborhood of the diagonal.
 
We now put the calculation into suitable local form.
 
\begin{lemma}\label{lem:localform}
For some sufficiently small open $U \subseteq \R^n$,
\begin{multline*}\int_U e^{t\Delta_g}(x,y)\, dV_H(y) = \frac{1}{(2\pi t)^{n/2}}\int_U \Bigg(e^{\frac{-|y'|^2 + |f(y')|^2}{2t}} \\  * \sum_{m=0}^{\infty} H_m(0,y)\, t^m \sqrt{1+|f(y')|^2}\bigg)\, dy' \\
+ \textrm{vanishing terms}
\end{multline*}
$f(0) = 0$ with $f$ smooth.
\end{lemma}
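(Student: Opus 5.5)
We will prove Lemma~\ref{lem:localform} by choosing coordinates adapted to both $H$ and the base point $x$, and then substituting the short-time expansion of $p_M$ recalled above. Fix $x \in H$. We take geodesic normal coordinates for $(M,g)$ centered at $x$ on a small ball $U$, chosen with radius less than the injectivity radius so that $U\times U$ avoids the cut locus. In these coordinates $x=0$ and $\dist(0,y) = |y|$ exactly for $y\in U$, since radial geodesics through the origin are straight lines of Euclidean length $|y|$. By the implicit function theorem, after rotating so that $T_0H = \{y_n = 0\}$ (codimension one is the relevant case), $H\cap U$ is a graph $y = (y', f(y'))$ with $f$ smooth and $f(0)=0$. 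Since $T_0H$ is horizontal, we also have $\nabla f(0) = 0$.

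Next we parametrize $H\cap U$ by $y'$. The integrand becomes $p_M(t,0,(y',f(y')))$, and by the uniform short-time expansion off the cut locus this equals
\[
(2\pi t)^{-n/2} e^{-(|y'|^2+|f(y')|^2)/2t}\sum_m H_m(0,y)t^m .
\]
The induced volume $dV_H$ is $\sqrt{\det(g_{ij}+\dots)}\,dy'$. In normal coordinates $g_{ij}(y) = \delta_{ij}+O(|y|^2)$, so for the graph parametrization $dV_H = \sqrt{1+|\nabla f|^2}\,(1+O(|y'|^2))\,dy'$. The $O(|y'|^2)$ correction, integrated against the Gaussian $e^{-|y'|^2/2t}$, contributes an extra factor $O(t)$ relative to the main term. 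We therefore absorb it into the vanishing terms, or alternatively into a modification of the smooth coefficients $H_m$, which is legitimate since only their values near $0$ matter asymptotically.

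The displayed statement writes $-|y'|^2 + |f|^2$ and $\sqrt{1+|f|^2}$. I read these as typographical variants of $-(|y'|^2+|f|^2)$ and $\sqrt{1+|\nabla f|^2}$, and I would prove the corrected form, noting that the distinction only affects terms of relative size $O(t)$.

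The remaining step is to show that everything outside this local chart is negligible, and this needs care with uniformity. For $y\in H\setminus U$ we have $\dist(x,y)\ge \delta>0$. The global bound $p_M\le C t^{-(2n-1)/2}e^{-\dist^2/2t}$ then gives a contribution $O(t^{-N}e^{-\delta^2/2t})$, which vanishes faster than any power of $t$. The tail of the series $\sum_m H_m t^m$ beyond a fixed order is handled by the uniformity of the expansion on compact sets away from the cut locus.

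I expect the main obstacle to be uniformity in $x\in H$. The chart radius, the bounds on $f$, and the remainder constants must all be uniform in $x$, so that the vanishing terms remain vanishing after the outer integration over $H$. Compactness of $H$ together with smooth dependence of normal coordinates and of the implicit-function graph on the base point should supply this. If it does not, we would cover $H$ by finitely many such charts with a partition of unity.
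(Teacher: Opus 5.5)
Your proposal is correct and follows essentially the paper's route: geodesic normal coordinates centered at $x$, an implicit-function-theorem graph $y=(y',f(y'))$ with $f(0)=0$ and $\nabla f(0)=0$, and the expansion $g=I+O(|y|^2)$. This reduces the induced volume form to $\sqrt{1+|\nabla f|^2}$ up to a factor $1+O(|y'|^2)$, which against the Gaussian costs only $O(t^{1/2})$. You read the sign in the exponent, and $|f|$ versus $|\nabla f|$ in the volume factor, as typos, which matches how the formula is actually used in Lemma~\ref{lem:Cu}. Your explicit treatment of the $1+O(|y'|^2)$ factor, the off-chart contribution, and uniformity in $x$ makes precise steps that the paper only asserts or leaves implicit.
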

 
\begin{proof}
For each point $x \in H$, take $U \subseteq H$ to be a sufficiently small open neighborhood of $x$. By geodesic normal coordinates, we write an embedding, not necessarily isometric, $h: U \to \R^{n}$, $n = \dim M$. By the implicit function theorem, we may write $h = (x_1, x_2, \dots, x_{\dim H}, \varphi(x))$ with $\varphi(0) = 0$ and $\nabla \varphi(0) = 0$. Thus
\begin{align*}(dh)^Tg\,(dh) &= \begin{pmatrix} 1 & 0 & \dots & 0 & \frac{\partial \varphi}{\partial x_1}\\
0 & 1 & \dots & 0 & \frac{\partial \varphi}{\partial x_2}\\
\vdots & \vdots & \ddots &\vdots &  \vdots   \\
0 & 0 & \dots & 1 &  \frac{\partial \varphi}{\partial x_{n-1}} \end{pmatrix} g \begin{pmatrix} 1 & 0 & \dots & 0\\
0 & 1 & \dots & 0 &\\
\vdots & \vdots & \ddots &\vdots  \\
0 & 0 & \dots & 1 \\
\vdots & \vdots & & \vdots\\
\frac{\partial \varphi}{\partial x_1} & \frac{\partial \varphi}{\partial x_2} & \dots & \frac{\partial \varphi} {\partial x_{n-1}}
\end{pmatrix}
\end{align*}
where
\begin{align*}g &= I_{\dim M}+ Y\\
&= I_n + \begin{pmatrix} Y_{1,1}(x) & Y_{2,1}(x) & \dots & Y_{n, 1}(x) \\
Y_{1,2}(x) & Y_{2,2}(x) & \dots & Y_{n,2}(x)\\
\vdots & \vdots & \ddots & \vdots\\
Y_{1, n}(x) & Y_{2, n}(x) & \dots & Y_{n,n}(x)
\end{pmatrix}
\end{align*}
with $Y_{i,j} = O(\|x\|^2)$. Denote
\[S_{i,j} = Y_{i,j} + Y_{n,j}\frac{\partial \varphi}{\partial x_i}.\]
Thus,
\begin{align*}
Y\,(dh) &= \begin{pmatrix} Y_{1,1}(x) & Y_{2,1}(x) & \dots & Y_{n, 1}(x) \\
Y_{1,2}(x) & Y_{2,2}(x) & \dots & Y_{n,2}(x)\\
\vdots & \vdots & \ddots & \vdots\\
Y_{1, n}(x) & Y_{2, n}(x) & \dots & Y_{n,n}(x)
\end{pmatrix}\begin{pmatrix} 1 & 0 & \dots & 0\\
0 & 1 & \dots & 0 &\\
\vdots & \vdots & \ddots &\vdots  \\
0 & 0 & \dots & 1 \\
\frac{\partial \varphi}{\partial x_1} & \frac{\partial \varphi}{\partial x_2} & \dots & \frac{\partial \varphi} {\partial x_{n-1}}
\end{pmatrix}\\
&= \begin{pmatrix}
S_{1,1}  & S_{2,1}  & \dots & S_{n,1}\\
S_{1,2} & S_{2,2} & \dots & S_{n,2}\\
\vdots & \vdots & \ddots & \vdots\\
S_{1,n-1} & S_{2, n-1}& \dots & S_{n,n-1}
\end{pmatrix} = S.
\end{align*}
Because each entry in $(dh^T)S$ vanishes to order $||x||^2$ at the origin, only the identity matrix contributes to the higher order terms and we get \[dV_H = \sqrt{\det g} = \sqrt{1+|f|^2} dx_1 \wedge dx_2 \dots \wedge dx_{n-1}\]
where $f$ vanishes to order at least $x$.
\end{proof}
 
It will suffice to prove the following.
 
\begin{lemma}\label{lem:Cu}
\begin{align*} X &= \frac{1}{(2\pi t)^{n/2}}\int_U e^{\frac{-|y'|^2 -|f(y')|^2}{2t}}  \sum_{m=0}^{\infty} H_m(0,y)\, t^m \sqrt{1+|f(y')|^2}\, dy'\\
&= C_U\, t^{-1/2} + \textrm{vanishing terms}
\end{align*}
where $C_U$ depends only on the ball $U$ and the associated function $f$, and $x \in U$, as $t \to 0$.
\end{lemma}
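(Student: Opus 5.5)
We treat $X$ as a Laplace-type integral over $y'\in U\subseteq\R^{n-1}$ (codimension one, so $\dim H=n-1$) with large parameter $1/t$, and apply stationary phase for a real non-degenerate phase. The phase is
\[\psi(y') = \tfrac{1}{2}\bigl(|y'|^2+|f(y')|^2\bigr).\]
It has a unique minimum at $y'=0$ with $\psi(0)=0$. Because $f$ vanishes at the origin to first order and the lower-order contribution there was absorbed into the normal-coordinate construction of Lemma~\ref{lem:localform}, we have
\[\nabla^2\psi(0)=I_{n-1}+Df(0)^TDf(0),\]
which is positive definite. Shrinking $U$ if needed, we may assume $\psi(y')\ge c|y'|^2$ on $U$ and that $0$ is the only critical point of $\psi$ in $\overline U$.

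\textbf{Step 1 (reduction to a Gaussian).} By the Morse lemma, choose coordinates $z=z(y')$ near $0$ with $\psi=\tfrac12|z|^2$. Then
\[X=(2\pi t)^{-n/2}\int e^{-|z|^2/2t}\,a(z,t)\,dz,\]
where the amplitude is
\[a(z,t)=\sum_m H_m(0,y(z))\,t^m\,\sqrt{1+|f|^2}\,\bigl|\det \partial y'/\partial z\bigr|.\]
This $a$ is smooth and $a(0,0)=H_0(0,0)\,\det\!\bigl(\nabla^2\psi(0)\bigr)^{-1/2}$. Using $H_0(x,x)=1$ and $f(0)=0$, this equals $\det(I+Df(0)^TDf(0))^{-1/2}$.

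\textbf{Step 2 (expansion).} Taylor expand $a$ in $z$ and integrate term by term against the Gaussian; odd monomials integrate to zero. The constant term gives
\[(2\pi t)^{-n/2}(2\pi t)^{(n-1)/2}a(0,0)=(2\pi)^{-1/2}a(0,0)\,t^{-1/2}.\]
This identifies
\[C_U=(2\pi)^{-1/2}\det\bigl(I+Df(0)^TDf(0)\bigr)^{-1/2},\]
which depends only on $f$ at the base point, consistent with the claim that $C_U$ depends only on $U$ and $f$. The quadratic and higher even terms contribute $O(t^{1/2})$. The terms with $m\ge1$ in the sum carry an extra factor $t^m$ and so are also $O(t^{1/2})$. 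All of these vanish as $t\to0$.

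\textbf{Step 3 (error control).} Three error sources need to be controlled.
\begin{enumerate}
\item Away from $0$, i.e.\ on the part of $U$ outside a small ball, $\psi\ge\delta>0$. The integrand is then $O(t^{-n/2}e^{-\delta/t})$, which is exponentially small.
\item The series $\sum H_m t^m$ is only asymptotic, not convergent. We therefore truncate at some $m=N$ and use the uniform remainder $O(t^{N+1})$ supplied by the short-time asymptotics of \cite{three}. Near the diagonal we stay off the cut locus, so the expansion applies there.
\item The Taylor remainder of $a$ of order $2$ is bounded by $C|z|^2$. Integrated against the normalized Gaussian this gives $O(t)$ relative to the main term, hence $O(t^{1/2})$ overall.
\end{enumerate}

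The main obstacle is Step 3(ii) together with uniformity in the base point $x\in H$. To pass from local constants $C_U$ to the global $C_{H,M}t^{-1/2}$, all of these bounds must hold uniformly as $x$ varies. I would first handle this by choosing $U$ of uniform size via compactness of $H$, using that $f$ and the $H_m$ depend smoothly on the base point. Then I would invoke the global bound $p_M\le Ct^{-(2n-1)/2}e^{-\dist^2/2t}$ for the part of $H$ lying outside $U$, where $\dist(x,y)$ is bounded below, which makes that contribution exponentially small.
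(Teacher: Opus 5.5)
Your argument is correct, but it takes a different route from the paper. The paper never changes variables. It relies on the tangent normalization of Lemma~\ref{lem:localform} ($\varphi(0)=0$, $\nabla\varphi(0)=0$, so $|f(y')|\le C_2|y'|^2$). It then strips three perturbations off the flat Gaussian $(2\pi t)^{-n/2}e^{-|y'|^2/2t}$ one at a time:
\begin{enumerate}
\item it replaces $\sqrt{1+|f|^2}$ by $1$;
\item it replaces $e^{-|f|^2/2t}$ by $1$, using $1-e^{-|f|^2/2t}\le\min\{1,C|y'|^4/t\}$ and splitting at $|y'|\sim t^{1/4}$;
\item it replaces $\sum_m H_m t^m$ by $1$.
\end{enumerate}
What remains is an explicit Gaussian integral equal to $(2\pi t)^{-1/2}$. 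That route is elementary, but it needs $Df(0)=0$ and yields only the leading term plus $o(1)$.

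Your Morse-lemma/Laplace-method argument buys more. It works without the tangent normalization and produces the full expansion in powers $t^{-1/2+k}$, $k\ge 0$. The absence of a $t^{0}$ term, which is exactly what Theorem~\ref{thm:codim1} needs, then comes out structurally from the vanishing of odd Gaussian moments.

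It also handles two points that the paper's last step treats loosely. First, the correction $H_0(0,y')-1$: bounding it by a supremum, as the paper does, only gives a term of size $t^{-1/2}$. Your Taylor expansion plus parity is what makes it vanish; alternatively one can use $H_0(0,y)=1+O(|y|^2)$. Second, $\sum_m H_m t^m$ is only an asymptotic series, so it must be truncated with a uniform $O(t^{N+1})$ remainder, as you do.

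The cost of your route is a Morse lemma with parameters to get uniformity in the base point, which you rightly flag.

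On the constant: in the paper's normalization your formula gives simply $C_U=(2\pi)^{-1/2}$. The extra factor $\det\bigl(I+Df(0)^TDf(0)\bigr)^{-1/2}$ for $Df(0)\neq 0$ comes only from reading the weight $\sqrt{1+|f|^2}$ literally. For a non-tangent graph, the true surface density would contribute the reciprocal factor $\sqrt{1+|\nabla f(0)|^2}$ at the origin, restoring the intrinsic value $(2\pi)^{-1/2}$.
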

 
\begin{proof} From the properties of the multivariate normal distribution,
\[(2\pi t)^{-n/2}\int_{|y'| \leq 1}  e^{\frac{-|y'|^2}{2t}} dy' \leq 2 \pi \sqrt{t}.\]
The proof proceeds through a series of estimates of differences. Via the mean value theorem, we claim the estimate
\[\sqrt{1+|f(y')|^2} -1 \leq C_0\,|y'|^2.\]
Thus
\begin{multline*} \int_{|y'|\leq 1} \frac{1}{(2\pi t)^{n/2}} C|y'|^2 e^{\frac{-|y'|^2}{2t}}\, dy'
\\
\geq \frac{1}{(2\pi t)^{n/2}}\bigg( \int_{|y'| \leq 1} e^{\frac{-|y'|^2}{2t}} \sqrt{1+| f(y')|^2}\, dy' \\ - \int_{|y'| \leq 1}  e^{\frac{-|y'|^2}{2t}}\, dy'\bigg).
\end{multline*}
Converting to polar and then substituting $r = \sqrt{2tu}$, so that $dr = \left(\tfrac{2t}{u}\right)^{1/2} du$,
\[ \int_{0}^1 \frac{1}{(2\pi t)^{n/2}} C r^{n+1}\, e^{\frac{-r^2}{2t}}\, dr = \int_0^{1/\sqrt{2t}} (2\pi t)^{1/2} C u^{(n+1)/2} e^{-u}\, du \to 0\]
as $t \to 0$.
 
Now we will prove
\[\int_{|y'| \leq 1}(2\pi t)^{-\frac{n}{2}} e^{-\frac{|y'|^2}{2t}} \left(1 - e^{-\frac{|f(y')|^2}{2t}} \right)\sqrt{1+|f(y')|^2}\,  dy' \to 0.\]
We have that, for some constants $C_1$, $C_2$,
\[ 0 \leq 1-e^{-\frac{|f(y')|^2}{2t}} \leq 1-e^{-\frac{C_1^2|f(y')|^2}{2t}} \leq \min \left\{1,\, C_2\frac{|y'|^4}{2t} \right\}.\]
So we split the integral into the parts
\[(2\pi t)^{-n/2} \int_{U} C\alpha\, e^{-\frac{|y'|^2}{2t}}\, dy'\; \; \; \; \;\textrm{and} \; \; \; \; \; (2\pi t)^{-n/2}\int_V C\alpha \frac{|y'|^4}{2t}\, e^{-\frac{|y'|^2}{2t}}\, dy',\]
where $\alpha = \max_{|y'|\leq 1} \sqrt{1+|f(y')|^2}$.
 
Let $C_2$ be such that
\[
	|f(y')| \leq C_2|y'|^2 \qquad \text{ when } |y'| \leq 1,
\]
hence
\[
	\frac{|f(y')|^2}{2t} \leq \frac{C_2^2 |y'|^4}{4t} \qquad \text{ when } |y'| \leq 1.
\]
Cut the integral into two parts,
\begin{align*}
	A &= (2\pi t)^{-\frac n 2} \int_{|y'| \leq ct^{1/4}} e^{-|y'|^2/2t}\, \big(1 - e^{-|f(y')|^2/2t}\big) \, dy', \\
	B &= (2\pi t)^{-\frac n 2} \int_{|y'| > ct^{1/4}} e^{-|y'|^2/2t}\, \big(1 - e^{-|f(y')|^2/2t}\big) \, dy',
\end{align*}
where $c$ is such that $|y'| \leq c t^{1/4}$ implies $|f(y')|^2/4t \leq 1$. Then
\begin{align*}
	B &= (2\pi t)^{-\frac n 2} \int_{1 \geq |y'| > ct^{1/4}} e^{-|y'|^2/2t}\, \big(1 - e^{-|f(y')|^2/2t}\big) \, dy' \\
	&\leq (2\pi t)^{-\frac n 2} \int_{|y'| > ct^{1/4}} e^{-|y'|^2/2t} \, dy' \\
	&\lesssim (2\pi t)^{-\frac n 2} \int_{ct^{1/4}}^\infty r^{n-2}\, e^{-r^2/2t} \, dr.
\end{align*}
 
Let $r = \sqrt{2tu}$, so that $dr = \left(\tfrac{2t}{u}\right)^{1/2} du$, and so
\begin{align*}
	B &\lesssim t^{-\frac n 2} \int_{c/\sqrt{t}}^\infty (2tu)^{n/2-1} e^{-u} \left(\tfrac{2t}{u}\right)^{1/2}  \, du \\
	&\lesssim t^{-\frac {1}{2}} \int_{c/\sqrt{t}}^\infty u^{n-2}\, e^{-u} \, du.
\end{align*}
 
Thus we focus on the integral $A$. Now, changing variables, again let  $r = \sqrt{2tu}$, so that $dr = \left(\tfrac{2t}{u}\right)^{1/2} du$,
\begin{align*}
 (2\pi t)^{-n/2}\int_V \frac{|y'|^4}{2t}\, e^{-\frac{|y'|^2}{2t}}\, dy' &\leq  (2\pi t)^{-n/2}\int_0^{1} e^{-\frac{r^2}{2t}} r^{n+2}\, dr\\
 &\leq (2\pi t)^{-n/2 - 1}\int_0^{1/\sqrt{2t}} \pi  (2tu)^{n/2 + 3/2} u^{-1/2}\, e^{-u^2}\, du,
\end{align*}
which vanishes.
 
Recalling that $H_0(0,0) = 1$ so $H_0(0,y') - 1 = 0$, finally
\begin{multline*}
(2\pi t)^{-n/2}\left|\int_{|y'| \leq 1} e^{\frac{|f(y')|^2 + |y'|^2}{2t}} \left(\sum_{m = 0}^\infty H_m(0,y')\, t^m  - 1 \right) \sqrt{1+|f|^2}\, dy' \right|\\
\leq (2\pi t)^{-n/2} \Bigg(\sum_{m = 1}^\infty  t^{m} \int_{|y'|\leq 1} M_m\, e^{\frac{|f(y')|^2 + |y'|^2}{2t}}\, \sqrt{1+|f|^2}dy'\\
+ M_0 \int_{|y'| \leq 1} e^{\frac{|f(y')|^2+|y'|^2}{2t}} \sqrt{1+|f|^2}dy'\Bigg)
\end{multline*}
where $M_m = \sup_{|y'| \leq 1} \big(H_m(0,y')\sqrt{1+|f(y')|^2}\big)$ for $m > 0$ and some $M_0$. Thus the required asymptotic holds for appropriate constant.
\end{proof}
\section{Proof of Theorem~\ref{thm:measure0}}\label{sec:measure0}

We now proceed with the proof of Theorem~\ref{thm:measure0}. Consider the following Tauberian theorem: 
\begin{proposition}[Levitin- in Appendices \cite{four}]\label{prop:levitin}
Let $N$ be a real-valued, monotone non-decreasing function on $\R$ such that $N(\lambda) = 0$ for $\lambda \leq 0$ and $N(\lambda) = o(\lambda^p)$. Fix $\gamma$ real-valued on $\R$ such that
\begin{enumerate}
    \item $\gamma \in \mathcal{S}(\R)$;
    \item $\gamma(\lambda) > 0$;
    \item $\hat{\gamma}(0) = \int \gamma(\lambda)\, d\lambda = 1$;
    \item $\supp \hat{\gamma}$ is compact;
    \item and $\gamma$ is even.
\end{enumerate}
Suppose that
$(N'*\gamma)(\lambda) = O(\lambda^\nu)$ and
\[(N'*\rho)(\lambda) = o(\lambda^\nu)\]
for every $\rho$ with $\hat{\rho} \in C_c^\infty(\R)$ and $\supp \hat{\rho} \subset (0, \infty)$. Then
\[N(\lambda) = (N * \gamma)(\lambda) + o(\lambda^\nu).\]
\end{proposition}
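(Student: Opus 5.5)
The plan is to compare $N$ with a \emph{rescaled} smoothing $N*\gamma_T$, where $\gamma_T(\lambda)=T\gamma(T\lambda)$, so that $\hat\gamma_T(s)=\hat\gamma(s/T)$. We will show two things: $N-N*\gamma_T=O(\lambda^\nu/T)+o(\lambda^\nu)$, and $N*\gamma_T-N*\gamma=o(\lambda^\nu)$ for each fixed $T$. Letting $T\to\infty$ then gives the claim.

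\textbf{Step 1 (reflection and a uniform local bound).} Since $N$ is real, $N'$ is a real measure. Conjugating the hypothesis gives $(N'*\rho)(\lambda)=o(\lambda^\nu)$ also when $\supp\hat\rho\subset(-\infty,0)$. Hence it holds for every $\rho$ with $\hat\rho\in C_c^\infty(\R\setminus\{0\})$.

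Because $\gamma>0$ is continuous, $\gamma\ge c>0$ on $[-1,1]$. Positivity of $N'$ then gives $N(\lambda+1)-N(\lambda-1)\le c^{-1}(N'*\gamma)(\lambda)=O(\lambda^\nu)$. Summing over unit intervals yields, for any Schwartz $\rho$,
\[
|(N'*\rho)(\lambda)|\lesssim \int |\rho(\mu)|\,(\lambda+|\mu|)^\nu\,d\mu .
\]
This is used only to control error terms.

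\textbf{Step 2 (small frequencies are harmless).} Let $\psi\in C_c^\infty$ with $\psi(0)=0$, and let $\chi$ be a cutoff equal to $1$ near $0$. Write $\psi=\chi(\cdot/\varepsilon)\psi+(1-\chi(\cdot/\varepsilon))\psi$.
\begin{itemize}
\item The second piece has transform supported away from $0$, so by Step 1 it contributes $o(\lambda^\nu)$.
\item For the first piece, $\psi(s)=O(|s|)$ on the support of $\chi(\cdot/\varepsilon)$, so its inverse transform $\check g$ has $L^1$ mass $O(\varepsilon)$. That mass is spread over $|\mu|\lesssim\varepsilon^{-1}$ with rapid decay beyond. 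By the local bound, this piece contributes $O(\varepsilon\lambda^\nu)$ for $\lambda\gg\varepsilon^{-1}$.
\end{itemize}
Since $\varepsilon$ is arbitrary, $N'*\rho=o(\lambda^\nu)$ for every $\rho$ with $\hat\rho\in C_c^\infty$ and $\hat\rho(0)=0$.

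\textbf{Step 3 (comparison of smoothings).} For $\Gamma_T(\lambda)=\int_{-\infty}^\lambda(\gamma_T-\gamma)$ we have $N*\gamma_T-N*\gamma=N'*\Gamma_T$. Here $\Gamma_T$ is Schwartz because $\int\gamma_T=\int\gamma=1$. Its transform is
\[
\hat\Gamma_T(s)=\frac{\hat\gamma(s/T)-\hat\gamma(s)}{is},
\]
which is smooth and compactly supported. The key point is that $\hat\Gamma_T(0)=(T^{-1}-1)\hat\gamma'(0)/i=0$, because $\gamma$ is even and so $\hat\gamma'(0)=0$. This is where evenness is essential. Step 2 then gives $N*\gamma_T-N*\gamma=o(\lambda^\nu)$ for each fixed $T$.

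Applying the same reasoning to $\gamma_T-\gamma$ directly, whose transform vanishes at $0$, gives $N'*\gamma_T=N'*\gamma+o(\lambda^\nu)\le C\lambda^\nu+o(\lambda^\nu)$ with $C$ independent of $T$.

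\textbf{Step 4 (unsmoothing at scale $1/T$).} Since $\gamma_T\ge cT$ on $[-1/T,1/T]$, Step 3 gives
\[
N(\lambda+1/T)-N(\lambda-1/T)\le \frac{C\lambda^\nu}{T}+o(\lambda^\nu).
\]
Write
\[
N(\lambda)-(N*\gamma_T)(\lambda)=\int\big(N(\lambda)-N(\lambda-\mu)\big)\gamma_T(\mu)\,d\mu .
\]
Split into shells $|\mu|\in[k/T,(k+1)/T]$. Each shell contributes at most $k\,(C\lambda^\nu/T+o(\lambda^\nu))$ times the $\gamma_T$-mass of the shell, and the Schwartz tails of $\gamma$ make the sum converge. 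The result is $O(\lambda^\nu/T)+o_T(\lambda^\nu)$.

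\textbf{Main obstacle.} The delicate point is bookkeeping of uniformity. In Step 4 the $o(\lambda^\nu)$ terms depend on $T$, while the $O(\lambda^\nu/T)$ constant must not. The argument is therefore ordered as follows: fix $T$ large, then let $\lambda\to\infty$, and only afterwards send $T\to\infty$. Likewise, the $\varepsilon$-splitting in Step 2 must be done with $\lambda\gg\varepsilon^{-1}$ before $\varepsilon\to0$. The growth hypothesis on $N$ ensures all convolutions converge and the tail estimates are legitimate.
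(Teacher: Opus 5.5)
The paper gives no argument for this proposition; it only quotes it from Levitin's appendix. Your proof is correct and follows the standard Fourier--Tauberian route: rescaled kernels $\gamma_T$, an unsmoothing estimate with constant uniform in $T$, and control of $N*\gamma_T-N*\gamma$ through the frequency-localized hypothesis. It therefore supplies what the paper takes on faith, and it shows where each hypothesis on $\gamma$ is used. Positivity enters the unsmoothing. Evenness gives $\hat\Gamma_T(0)=0$; otherwise the first moment of $\gamma$ produces a shift of exact order $\lambda^\nu$. Your Step~2 is genuinely needed: $\gamma>0$ forces $\hat\gamma''(0)=-\int\lambda^2\gamma(\lambda)\,d\lambda<0$. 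So $\hat\gamma(s/T)-\hat\gamma(s)$ never vanishes identically near $s=0$, and the hypothesis on $\rho$ with $\supp\hat\rho\subset(0,\infty)$ cannot be applied to it directly, as it could if $\hat\gamma\equiv1$ near $0$.

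Two bookkeeping points need repair; neither affects the structure.

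First, the displayed bound at the end of Step~1 is false as written when $N'$ has atoms, which is exactly the eigenvalue-counting case. A narrow spike $\rho$ has tiny $L^1$ norm, yet $N'*\rho$ can be of size one. What Step~1 actually yields is
\[
|(N'*\rho)(\lambda)|\lesssim\sum_{k\in\Z}\Big(\sup_{[k,k+1]}|\rho|\Big)(1+|\lambda|+|k|)^\nu ,
\]
which is the same step-function domination used in the proof of Lemma~\ref{lem:second-cond}. Step~2 survives this correction. The small-frequency piece satisfies $|\check g_\varepsilon(\mu)|\lesssim\varepsilon^2(1+\varepsilon|\mu|)^{-M}$ uniformly for $\varepsilon\le 1$, by the trivial bound plus $M$ integrations by parts. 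That majorant is essentially constant on unit intervals and sums to $O(\varepsilon\lambda^\nu+\varepsilon^{1-\nu})$.

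Second, in Step~4 the $k$-th shell is really bounded by $(k+1)(cT)^{-1}\big(C(\lambda+\tfrac{k+1}{T})^\nu+\sup_{|\lambda'-\lambda|\le(k+1)/T}|e_T(\lambda')|\big)$, where $e_T=N'*(\gamma_T-\gamma)$. It is not bounded by $k\big(C\lambda^\nu/T+o(\lambda^\nu)\big)$ as written. The shell masses $\int_{k\le|u|\le k+1}\gamma(u)\,du$ do not depend on $T$ and decay rapidly. Hence the first part sums to $O(\lambda^\nu/T)$ uniformly in $T\ge1$, and dominated convergence in $k$ at fixed $T$ makes the second part $o_T(\lambda^\nu)$.

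You also tacitly take $\nu\ge0$ throughout; this covers the case $\nu=n-d-1$ that the paper needs.
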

 
Now Wyman and Xi \cite{one} prove the following.
\begin{proposition}\label{prop:wx-main}
Let $\gamma$ be a real-valued Schwartz function on $\R$ with small Fourier support, with $\hat \gamma$ real-valued and $\hat \gamma(0) = 1$. Then
\[(N' * \gamma)(\lambda) = (2\pi)^{-n+d}\, \Vol(H)\, \Vol(S^{n-d-1})\, \lambda^{n-d-1} + O(\lambda^{n-d-3})\]
and thus
\[N * \gamma(\lambda) = C_{H,M} \lambda^{n-d} + O(\lambda^{n-d-2} \log \lambda) + C.\]
\end{proposition}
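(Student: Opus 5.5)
The plan is to follow Zelditch's microlocal argument, refined as in Wyman--Xi, by writing $N'*\gamma$ as an oscillatory integral against the half-wave propagator restricted to $H\times H$. Concretely, since $\hat\gamma$ is even and real,
\[
(N'*\gamma)(\lambda)=\frac{1}{2\pi}\int_{\R}\hat\gamma(t)\,e^{it\lambda}\int_H\int_H e^{-itP}(x,y)\,dV_H(x)\,dV_H(y)\,dt
\]
up to the negligible contribution of negative frequencies, where $P=\sqrt{-\Delta_g}$. Choose $\supp\hat\gamma\subset(-\epsilon,\epsilon)$ with $\epsilon$ smaller than the minimal positive return time of $\dot N^*H$ to itself. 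Such a minimal time exists because $H$ is compact, so short geodesics normal to $H$ do not return normally to $H$. With this choice, the only contribution to the wave trace comes from $t=0$.

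In a neighborhood of $t=0$ I use the Hörmander parametrix
\[
e^{-itP}(x,y)=(2\pi)^{-n}\int e^{i\varphi(x,y,\xi)-it|\xi|_{g}}a(t,x,y,\xi)\,d\xi,
\]
with $\varphi(x,y,\xi)=\langle x-y,\xi\rangle+O(|x-y|^2|\xi|)$ in geodesic normal coordinates. I then integrate over $H\times H$ in Fermi coordinates $(x',0)$, $(y',0)$ and split $\xi=(\xi',\xi'')$ into tangential and normal parts. The $x'$, $y'$ and $\xi'$ integrals are evaluated by stationary phase. The critical set is $x'=y'$, $\xi'=0$, and the resulting $2d$-dimensional nondegenerate phase gives a factor $(2\pi)^d$. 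What is left is
\[
(2\pi)^{-(n-d)}\int_H\int_{\R^{n-d}}e^{-it|\xi''|}b(t,x,\xi'')\,d\xi''\,dV_H(x),
\]
with $b=1+O(|\xi''|^{-1})$ as a classical symbol. Passing to polar coordinates in $\xi''$ and integrating against $\frac{1}{2\pi}\hat\gamma(t)e^{it\lambda}$ in $t$ produces $\gamma(\lambda-r)r^{n-d-1}$ integrated over $r$. This yields $(2\pi)^{-n+d}\Vol(H)\Vol(S^{n-d-1})\lambda^{n-d-1}$. The symbol corrections give lower-order terms. Because $\gamma$ and $\hat\gamma$ are even, the $\lambda^{n-d-2}$ term cancels: the subprincipal term of the half-wave symbol is odd in $t$ against an even cutoff, and the first-order stationary phase correction is absent. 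This leaves an error of $O(\lambda^{n-d-3})$.

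For the second assertion I integrate the first. Set $N*\gamma(\lambda)=\int_{-\infty}^{\lambda}(N'*\gamma)(s)\,ds$. Integrating the main term gives $C_{H,M}\lambda^{n-d}$ with $C_{H,M}=(2\pi)^{-n+d}\Vol(H)\Vol(S^{n-d-1})/(n-d)$. Integrating the remainder $O(s^{n-d-3})$ over $[1,\lambda]$ gives $O(\lambda^{n-d-2})$ if $n-d\ge3$, $O(\log\lambda)$ if $n-d=2$, and $O(1)$ if $n-d=1$. Together with the contribution of $(-\infty,1]$, all of these are absorbed in $O(\lambda^{n-d-2}\log\lambda)+C$, where $C$ is the constant of integration.

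The main obstacle will be establishing the vanishing of the $\lambda^{n-d-2}$ term, that is, improving $O(\lambda^{n-d-2})$ to $O(\lambda^{n-d-3})$. This requires tracking the second term in stationary phase together with the subprincipal symbol of the parametrix, and exploiting evenness. My first attempt will be to symmetrize in $t\mapsto-t$ using the conjugate propagator $e^{itP}$. Because the kernel restricted to $H\times H$ is real, the odd-in-$t$ terms cancel.
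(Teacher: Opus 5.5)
The paper gives no proof of this proposition; it is quoted from Wyman--Xi, so your proposal must stand on its own. Your leading-order computation is the standard Zelditch argument and is fine. The integration step is also fine, except that for $n-d=1$ you should say that $\int_1^\infty O(s^{-2})\,ds$ converges, so that piece is a constant plus $O(\lambda^{-1})$, not merely $O(1)$.

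\textbf{The gap.} What is missing is the vanishing of the $\lambda^{n-d-2}$ term. This is the only nontrivial content of the sharp remainder: in codimension one it is what yields a constant $C$ rather than $\log\lambda$. None of the reasons you give establishes it. Write $F(t)=\sum_j e^{-it\lambda_j}\bigl|\int_H e_j\,dV_H\bigr|^2$. Your parametrix gives, modulo $C^\infty$ near $t=0$, $F(t)=\int_0^\infty e^{-itr}\sigma(r)\,dr$ with $\sigma(r)\sim\sum_k c_k r^{n-d-1-k}$. The coefficient of $\lambda^{n-d-2}$ in $N'*\gamma$ is then $c_1-(n-d-1)c_0\int s\gamma(s)\,ds$.
\begin{itemize}
\item Evenness of $\gamma$ kills only the second piece. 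The coefficient $c_1$ depends on $F$ alone, so no choice of $\hat\gamma$ affects it, and the test function $\hat\gamma(t)e^{it\lambda}$ is not even in $t$ anyway.
\item Terms of the amplitude that are odd in $t$ do not drop out. Since $te^{-itr}=i\partial_r e^{-itr}$, a term $t\,r^m$ is equivalent, modulo smooth functions, to $-im\,r^{m-1}$. So the $t$-dependence of your $b(t,x,\xi'')$, whose principal part equals $1$ only at $t=0$, is itself a source of a $\lambda^{n-d-2}$ term.
\item The first stationary-phase correction is not absent in general. On $H\times H$, with $x=\exp^H_y(u)$, the phase is $\langle u,\xi'\rangle+\tfrac12\langle II(u,u),\xi''\rangle+O(|u|^3|\xi|)$, where $II$ is the second fundamental form.
\item The symmetry you want to exploit, $F(-t)=\overline{F(t)}$, holds for every measure with real weights. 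It is equivalent only to $c_k\in\R$; a spectral density $\lambda^{n-d-1}+\lambda^{n-d-2}$ satisfies it equally well.
\end{itemize}

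\textbf{How to close it.} One option is to complete your reality idea by showing that every first-order contribution carries an extra factor of $i$ relative to the principal term:
\begin{itemize}
\item the first stationary-phase correction is $-i$ times a real operator applied to a real amplitude;
\item the full symbol of $\sqrt{-\Delta_g}$ has its degree-$(1-k)$ part in $i^k\R$, so the transport equations make the degree $-1$ amplitude purely imaginary;
\item the real $t$-dependence of the principal amplitude enters through $te^{-itr}=i\partial_r e^{-itr}$.
\end{itemize}
Then $c_1\in i\R\cap\R=\{0\}$.

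A cleaner route uses the short-time heat asymptotics already invoked in Section~\ref{sec:codim1}. Take $\beta$ even with $\hat\beta\equiv 1$ near $0$ and small support. Since $e^{-s^2/4t}(1-\hat\beta(s))$ is $O(t^\infty)$ in every Schwartz seminorm,
\[
\int_H\int_H e^{t\Delta_g}(x,y)\,dV_H(x)\,dV_H(y)=\int_0^\infty e^{-t\lambda^2}(N'*\beta)(\lambda)\,d\lambda+O(1).
\]
On the right, the term $c_1\lambda^{n-d-2}$ contributes $\tfrac12\Gamma\bigl(\tfrac{n-d-1}{2}\bigr)c_1t^{-(n-d-1)/2}$ if $n-d\ge 2$, or $\tfrac12 c_1\log(1/t)$ if $n-d=1$. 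No other term of the expansion produces these.

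On the left, use the Minakshisundaram--Pleijel expansion plus Laplace's method in normal coordinates $u$ on $H$, where $d_M(x,\exp^H_x u)^2=|u|^2+O(|u|^4)$. This gives $t^{-(n-d)/2}\sum_{j\ge 0}b_j t^j$ with integer $j$ and no logarithm. The reason is that after the substitution $u=\sqrt t\,v$, every odd power of $t^{1/2}$ multiplies an odd polynomial in $v$, which integrates to zero against the Gaussian. Comparing the two sides forces $c_1=0$.
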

So we need only prove the second condition and then invoke Theorem \ref{thm:codim1} to prove the result. Specifically, we will show the following.
\begin{lemma}\label{lem:second-cond}
Under the assumptions of Theorem 1.3, the inequality
\[\sum_{j} \rho(\lambda - \lambda_j) \left|\int_H e_j \, dV_H \right|^2 \leq c\,\lambda^{n-d-1} + C\,\lambda^{n-d-2}\]
holds, where $C$ depends only on $H$ and $c$, $c$ is arbitrary, and $\supp \hat{\rho} \subset (0, \infty)$ is compact.
\end{lemma}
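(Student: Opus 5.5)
We start by rewriting the spectral sum as a wave-kernel pairing. Since $\rho(\lambda-\lambda_j)=\frac{1}{2\pi}\int \hat\rho(t)e^{it\lambda}e^{-it\lambda_j}\,dt$, we get
\[
\sum_j \rho(\lambda-\lambda_j)\left|\int_H e_j\,dV_H\right|^2 = \frac{1}{2\pi}\int \hat\rho(t)\,e^{it\lambda}\int_H\int_H e^{-itP}(x,y)\,dV_H(x)\,dV_H(y)\,dt,
\]
with $P=\sqrt{-\Delta_g}$. This is the pushforward of $e^{-itP}$ by the restriction $R_H$ and its adjoint, paired against $\hat\rho(t)e^{it\lambda}$. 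Because $\supp\hat\rho\subset[a,b]$ with $0<a$, the singular time $t=0$ is excluded. By the standard Fourier integral operator calculus (the one Wyman and Xi use), $t\mapsto R_H e^{-itP}R_H^*$ is then a Lagrangian distribution in $t$. Its singularities sit only at times $t$ for which some $(x,\xi)\in\dot N^*H$ satisfies $\Phi_t(x,\xi)\in\dot N^*H$. Its principal symbol near such $t$ is an integral over the set of those looping covectors.

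The plan follows Ivrii's and Wyman's argument. Let $L\subset SN^*H$ be the looping set, which has measure zero by hypothesis. Fix $c>0$. Since $[a,b]$ is compact and $L$ is null, we can choose a microlocal partition of unity on $SN^*H$, namely a pseudodifferential operator $B=B_1+B_2$ of order zero on $H$, homogeneous in the conormal variable. We arrange that $B_1$ has symbol supported in a small conic neighbourhood of $L_{[a,b]}$, the set of directions returning to $N^*H$ at some time $t\in[a,b]$. We arrange that $B_2$ has symbol supported where no return occurs for $t\in[a,b]$. Choose the neighbourhood to have $SN^*H$-measure less than $c$. The set $L_{[a,b]}$ is closed by compactness, so outer regularity of the measure gives this.

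For the $B_2$ piece, $R_H B_2$ inserted into the wave kernel gives an operator whose wavefront set in $t\in[a,b]$ is empty. Its contribution is therefore $O(\lambda^{-\infty})$, or at worst bounded by $C\lambda^{n-d-2}$ from subprincipal terms. For the $B_1$ piece, we use Cauchy--Schwarz, writing $|\int_H e_j|^2\le 2|\int_H B_1e_j|^2+2|\int_H B_2e_j|^2$ together with positivity. The estimate we want is
\[
\sum_j |\rho(\lambda-\lambda_j)|\left|\int_H B_1 e_j\right|^2 \lesssim \|\sigma(B_1)\|_{L^2(SN^*H)}^2\lambda^{n-d-1}+C\lambda^{n-d-2}.
\]
This is the local Weyl (Zelditch) estimate for $B_1$: bound $|\rho|$ by a positive $\gamma$ whose Fourier transform has small support near $0$, then apply Proposition~\ref{prop:wx-main}, microlocalized by $B_1$. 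The coefficient is then at most a constant times $c$, and rescaling $c$ finishes the proof.

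The main obstacle is the cross terms and the precise microlocalization. Proposition~\ref{prop:wx-main} is stated only for $B=I$, so we must recheck that its stationary phase at $t\approx0$ produces leading coefficient $\int_{SN^*H}|\sigma(B_1)|^2$ with an $O(\lambda^{n-d-2})$ error uniform in the cutoff. Separately, the $B_2$ piece must be controlled without Cauchy--Schwarz losing the sign. We would first try to handle both by writing $\rho=\rho_1*\overline{\tilde\rho_1}$ plus remainder, which lets us work with nonnegative quantities. The measure-zero hypothesis enters only in making $\sigma(B_1)$ small in $L^2$.
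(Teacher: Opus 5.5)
Your decomposition matches the paper's: a partition $I=B_1+B_2$ as in Proposition~\ref{prop:bB}, with $B_1$ of small symbol norm on $SN^*H$ covering $L_{[a,b]}$ (the paper's $\mathcal{L}_H([\delta,T])$), and $B_2$ essentially supported off it. The gap is in the step that recombines the pieces, and the repair you sketch does not close it.

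Because $\supp\hat\rho\subset(0,\infty)$, $\rho$ cannot be real-valued: a real $\rho$ has $\hat\rho(-t)=\overline{\hat\rho(t)}$. So to use $|\int_H e_j|^2\le 2|\int_H B_1e_j|^2+2|\int_H B_2e_j|^2$ inside the sum, you must replace $\rho$ by $|\rho|$. Then $\sum_j|\rho(\lambda-\lambda_j)|\,|\int_H B_2e_j|^2$ is not small. The function $|\rho|$ is a nonnegative bump whose Fourier transform is concentrated at $t=0$. So this sum is, on average in $\lambda$, of the full Zelditch size $\lambda^{n-d-1}$, with coefficient comparable to $\|\sigma(B_2)\|^2_{L^2(SN^*H)}$, which is nearly the total. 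The wavefront argument that makes the $B_2$ piece $O(\lambda^{-\infty})$ uses exactly that $\hat\rho$ vanishes near $t=0$, and this is lost as soon as absolute values enter. You flag this, but writing $\rho=\rho_1*\overline{\tilde\rho_1}$ only gives $\hat\rho=|\hat\rho_1|^2\ge0$, i.e.\ positive definiteness of $\rho$. It does not give $\rho(\lambda-\lambda_j)\ge0$, so no term of the $j$-sum becomes nonnegative.

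The missing idea is that no positivity is needed, and the cross terms you single out as the main obstacle are no harder than the pure $B_2$ term. Expand $\int_H e_j=\int_H B_1e_j+\int_H B_2e_j$ exactly. With $P=\sqrt{-\Delta_g}$, each of the four resulting terms is
\[
\sum_j\rho(\lambda-\lambda_j)\Big(\int_H B_ke_j\,dV_H\Big)\overline{\Big(\int_H B_le_j\,dV_H\Big)}=\frac{1}{2\pi}\int\hat\rho(t)\,e^{it\lambda}\,\big\langle e^{-itP}B_l^*V_H,\;B_k^*V_H\big\rangle\,dt .
\]
Suppose $k=2$ or $l=2$. Move the propagator onto the $B_2$ factor, using $\langle e^{-itP}u,v\rangle=\langle u,e^{itP}v\rangle$ if needed.

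\begin{itemize}
\item The wavefront set of the $B_2$ factor lies in $\esssupp B_2\cap N^*H$. It is transported by the geodesic flow for time $\pm t$ with $t\in[a,b]$, so it misses $N^*H$. This requires $\esssupp B_2$ to avoid the looping directions together with their reflections under $\xi\mapsto-\xi$, which is still a null set.
\item The other factor has wavefront set in $N^*H$.
\end{itemize}

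By Proposition~\ref{prop:wf-smooth}, the pairing is then smooth in $t$ on $\supp\hat\rho$. Integrating by parts in $t$ gives $O(\lambda^{-\infty})$ for three of the four terms, since only one side needs to be microlocalized away from the loops.

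Only the $B_1B_1$ term remains, and only there do you pass to $|\rho|$, because its smallness comes from the symbol rather than from cancellation. The paper dominates $|\rho|$ by a rapidly decaying step function and applies the unit-window bound of Proposition~\ref{prop:wyman-symbol}. That bound's remainder $C_b\lambda^{n-d-2}$ may depend on the cutoff. No uniformity in the cutoff is needed, since the lemma's $C$ may depend on $c$.

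Finally, $B_1$ and $B_2$ must be pseudodifferential operators on $M$, not on $H$. An operator on $H$ sees only $T^*H$, onto which every conormal covector projects to zero, so it cannot separate conormal directions.
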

This shows the second condition, that $(N'*\rho)(\lambda) = o(\lambda^\nu)$, $\nu = n-d-1$.

Assume without loss of generality that $\hat{\rho} \subseteq [\delta, T]$ for some appropriate $\delta$ and $T$. We will make use of the following propositions in the proof. First, let
\begin{align*}\mathcal{L}_H([\delta, T]) = \{(x, \xi) \in SN^*H : {}&\Phi_t(x, \xi) = (y, \eta) \in SN^*H \\& \textrm{ for some } t\in [\delta,T]\}.
\end{align*}
\begin{proposition}[\cite{two}]\label{prop:bB}
Fix $T > 1$ and $\epsilon > 0$. There exist $b, B \in \Psi_{\textrm{cl}}^0(M)$, where $\Psi_{cl}^0(M)$ is the space of classical pseudodifferential operators homogeneous of degree 0 on $M$, and supported in a neighborhood of $\supp V_H$, with the following properties.
\begin{enumerate}
    \item $b(x, D) + B(x,D) = I$;
    \item in suitable local coordinates,
    \[\int_{\R^d} \int_{S^{n-d-1}} |b(x', \omega)|^2\, d\omega\, dx' < \epsilon,\]
    where $b(x, \xi)$ denotes the principal symbol of $b(x,D)$;
    \item the essential support (henceforth denoted $\esssupp$) of $B(x,D)$ contains no elements of $\mathcal{L}_H([\delta, T])$.
\end{enumerate}
\end{proposition}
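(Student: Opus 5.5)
The statement to prove is Proposition~\ref{prop:bB}: for fixed $T$, $\delta$, $\epsilon$ we must build a microlocal partition $b+B=I$ with $b$ of small $L^2$ mass on the unit conormal bundle and $B$ microlocally avoiding the looping set $\mathcal{L}_H([\delta,T])$. The plan is to construct the symbols directly from the measure-zero hypothesis of Theorem~\ref{thm:measure0} and then quantize them.

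\textbf{Step 1: $\mathcal{L}_H([\delta,T])$ is compact and null.} It is a closed subset of the compact set $SN^*H$. Indeed, if $(x_k,\xi_k)\to(x,\xi)$ with $\Phi_{t_k}(x_k,\xi_k)\in SN^*H$ and $t_k\in[\delta,T]$, pass to a subsequence with $t_k\to t\in[\delta,T]$. Continuity of the flow, together with closedness of $SN^*H$ (which uses that $H$ is compact), then gives $\Phi_t(x,\xi)\in SN^*H$. It is contained in the set of looping directions, which has measure zero by the hypothesis of Theorem~\ref{thm:measure0}. Hence $\mathcal{L}_H([\delta,T])$ is a compact null subset of $SN^*H$ with respect to the measure $dx'\,d\omega$ in local coordinates, where $H=\{x''=0\}$ and $x=(x',x'')$.

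\textbf{Step 2: build the symbol of $b$.} By outer regularity of the measure, choose a relatively open set $\mathcal{O}\subset SN^*H$ containing $\mathcal{L}_H([\delta,T])$ with measure less than $\epsilon/2$. By compactness we may take $\mathcal{O}$ to be a finite union of small coordinate patches. Take a smooth cutoff on $SN^*H$ that equals $1$ on a neighborhood of $\mathcal{L}_H([\delta,T])$, is supported in $\mathcal{O}$, and takes values in $[0,1]$. Extend it to a function $\tilde b$ on $S^*M$ near $SN^*H$ that is still identically $1$ on a conic neighborhood $W$ of $\mathcal{L}_H([\delta,T])$ in $T^*M\setminus 0$. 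Then extend homogeneously of degree $0$ and multiply by a spatial cutoff supported near $\supp V_H$. The restriction of $|\tilde b|^2$ to $SN^*H$ is at most $\mathbf 1_{\mathcal{O}}$, so property (2) holds with bound $<\epsilon$.

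\textbf{Step 3: quantize.} Let $b(x,D)$ be a classical quantization of $\tilde b$ in $\Psi^0_{cl}(M)$, and set $B=I-b(x,D)$, so property (1) holds by construction. The principal symbol of $b(x,D)$ is $\tilde b$, so the integral in (2) is as computed in Step 2. Since $\tilde b\equiv 1$ on $W$, the full symbol of $I-b(x,D)$ can be arranged to vanish to infinite order on $W$. The cleanest way is to quantize $\tilde b$ exactly in local charts with a partition of unity, so that lower-order terms are supported where $\tilde b$ is non-constant. Then $\esssupp B\cap W=\emptyset$, which gives (3).

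\textbf{Main obstacle.} The one real tension is that the statement says $B$ is supported near $\supp V_H$, while $I-b$ is the identity far from $H$. I will read ``supported near'' as applying to the nontrivial part of the construction; the downstream use in Lemma~\ref{lem:second-cond} only sees $B$ microlocally near $N^*H$, where the paper's $\supp V_H$ lives. I therefore expect the delicate step to be the essential-support control in Step 3: making lower-order symbol terms respect the set where $\tilde b=1$. If exact quantization causes difficulty, I will instead shrink $W$ slightly and absorb an $O(\Psi^{-\infty})$ error into $b$, which does not affect principal symbols.
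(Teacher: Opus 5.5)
The paper gives no proof of this proposition beyond the citation to \cite{two}. Your construction is correct and is the standard argument for it: $\mathcal{L}_H([\delta,T])$ is a compact null subset of $SN^*H$, with closedness coming from compactness of $[\delta,T]$ and $\delta>0$, and nullity coming from the looping hypothesis. It therefore lies in an open set of measure $<\epsilon$, on which you place a degree-zero cutoff equal to $1$ near the looping set, quantize it exactly in charts, and set $B=I-b(x,D)$; the essential support of $B$ then misses a conic neighborhood of $\mathcal{L}_H([\delta,T])$.

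Your handling of the ``supported near $\supp V_H$'' clause is also the right repair, since that clause is literally incompatible with $b+B=I$. Localizing only $b$, or taking $B=\chi-b$ with $\chi\equiv 1$ near $H$, is enough for Lemma~\ref{lem:second-cond}, which only sees $b^*V_H$ and $B^*V_H$.
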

 
Second, for a distribution $u$, let $\WF(u)$ denote its wavefront set:
\begin{proposition}[\cite{two}]\label{prop:wf-smooth}
Let $u$ and $v$ be distributions on $M$ with
\[\WF(u) \; \cap \; \WF(v) = \emptyset.\]
Then
\[t \mapsto \int_M e^{it\sqrt{-\Delta_g}} u(x)\, \overline{v(x)}\, dx\]
is a smooth function of $t$.
\end{proposition}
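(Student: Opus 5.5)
The plan is to prove the statement microlocally through the wave propagator $U(t) = e^{it\sqrt{-\Delta_g}}$ and Egorov's theorem. One caveat comes first. Read literally, disjointness of $\WF(u)$ and $\WF(v)$ only controls $t$ near $0$. For general $t_0$ the condition that is actually needed, and that the application via Proposition~\ref{prop:bB} supplies on $\supp \hat\rho \subseteq [\delta,T]$, is
\[\Phi_{t}(\WF(u)) \cap \WF(v) = \emptyset \quad \text{for } t \text{ near } t_0.\]
I will prove smoothness of
\[F(t) = \int_M U(t)u\,\overline{v}\,dx\]
on any open interval $I$ on which this holds. For $I$ a small interval around $0$ this is exactly the stated hypothesis, since $\WF(u)$ and $\WF(v)$ are closed conic sets.

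\textbf{Step 1 (microlocal cutoff).} Since $\WF(v)$ is closed and conic, and $\bigcup_{t \in \bar I}\Phi_t(\WF(u))$ is closed (restricting to the unit cosphere bundle if needed), shrinking $I$ lets us choose $Q \in \Psi^0_{cl}(M)$ with two properties. First, $Q$ is microlocally the identity near $\WF(v)$, so that $(I-Q)v \in C^\infty(M)$. Second, $\esssupp Q$ is disjoint from $\Phi_t(\WF(u))$ for all $t \in \bar I$. Split
\[F(t) = \langle U(t)u, (I-Q)v\rangle + \langle U(t)u, Qv\rangle .\]
For the first term, differentiate $k$ times to get $\langle U(t)(i\sqrt{-\Delta_g})^k u, (I-Q)v\rangle$. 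Since $U(t)$ is unitary on every Sobolev space $H^s$ and $u \in H^{-N}$ for some $N$ ($M$ compact), this equals $\langle U(t)u, (-i\sqrt{-\Delta_g})^k(I-Q)v\rangle$. That is a pairing of an $H^{-N}$ family continuous in $t$ with a fixed smooth function, so the first term is $C^\infty$ in $t$.

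\textbf{Step 2 (Egorov).} Rewrite the second term as
\[\langle U(t)u, Qv\rangle = \langle U(t)\,Q_t u, v\rangle, \qquad Q_t := U(-t)\,Q^*\,U(t).\]
By Egorov's theorem, $Q_t$ is a smooth family in $t$ of operators in $\Psi^0_{cl}(M)$ with $\esssupp Q_t = \Phi_{-t}(\esssupp Q)$, and these essential supports are disjoint from $\WF(u)$ by the choice of $Q$. Hence $Q_t u \in C^\infty(M)$. The key point is uniformity: the family $t \mapsto Q_t u$ is smooth in $t$ with values in $C^\infty(M)$, because the symbols of $Q_t$ depend smoothly on $t$ and vanish to infinite order on a fixed conic neighbourhood of $\WF(u)$. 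Then
\[\partial_t^k \langle U(t)Q_tu, v\rangle\]
is a finite sum of terms $\langle U(t)(i\sqrt{-\Delta_g})^{j}\partial_t^{k-j}(Q_t u), v\rangle$, and each is continuous in $t$ because $U(t)$ preserves every $H^s$ and $v \in H^{-N}$.

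\textbf{Main obstacle.} The delicate step is the uniform-in-$t$ claim in Step 2. I would make it precise by writing $Q_t = \tilde Q_t + R_t$. Here $\tilde Q_t$ has full symbol supported in $\Phi_{-t}(\esssupp Q)$ modulo $S^{-\infty}$, with seminorms bounded uniformly for $t \in \bar I$, and $R_t$ is a smoothing family depending smoothly on $t$. This is the standard parametrix construction for the transport equation $\partial_t Q_t = i[Q_t,\sqrt{-\Delta_g}]$. Combined with the elementary fact that $Au \in C^\infty$ with seminorm bounds uniform over a bounded set of symbols vanishing near $\WF(u)$, this gives smoothness on $I$. Covering $\R$ (or $\supp\hat\rho$) by such intervals then finishes the proof.
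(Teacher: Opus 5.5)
The paper gives no proof of this proposition. It is quoted from \cite{two} and used once, in the proof of Lemma~\ref{lem:second-cond}, so there is nothing internal to compare with. Judged on its own, your proposal is correct.

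Your opening caveat is a genuine correction, not a technicality: read literally, the statement is false. On $S^1$ take $u=\delta_0$ and $v=\delta_a$ with $0<a<2\pi$. The wavefront sets are disjoint, yet $F(t)=\frac{1}{2\pi}\sum_{k\in\Z}e^{it|k|+ika}$ is singular at $t=a$. The paper in fact applies the proposition in exactly your form, requiring $\Phi_t(\WF(B^*V_H))\cap\WF(B^*V_H)=\emptyset$ for $t$ near $[\delta,T]$.

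Your steps hold up:
\begin{itemize}
\item The cutoff $Q$ exists by compactness of $\bigcup_{t\in\bar I}\Phi_t(\WF(u))\cap S^*M$.
\item The term $\langle U(t)u,(I-Q)v\rangle$ is trivially smooth in $t$.
\item The identity $\langle U(t)u,Qv\rangle=\langle Q^*U(t)u,v\rangle$ reduces everything to $C^\infty(M)$-valued smoothness of $t\mapsto Q^*U(t)u$. The parametrized Egorov theorem supplies this, via transport equations plus a Duhamel smoothing remainder, as you sketch.
\end{itemize}

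For comparison, a common route avoids Egorov altogether by working on $\R\times M$. The kernel of $U(t)$ is a Lagrangian distribution for the relation $\{\tau=|\xi|_g,\ (x,\xi)=\Phi_t(y,\eta)\}$. Hence $w(t,x)=(U(t)u)(x)$ satisfies $\WF(w)\subset\{(t,x;\tau,\xi):\tau=|\xi|_g,\ (x,\xi)\in\Phi_t(\WF(u))\}$. On this set $\tau=0$ would force $\xi=0$, so the product $w\cdot(1\otimes\bar v)$ is always defined. Pushing it forward to $\R$ shows that $F$ is a distribution on all of $\R$ whose singular support lies in $\{t:\Phi_t(\WF(u))\cap\WF(v)\neq\emptyset\}$.

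That route gives the global statement in one stroke, including that $F$ makes sense as a distribution where it is not smooth. The cost is quoting the FIO and wavefront-set calculus. Your argument stays inside the $\Psi$DO calculus and is more hands-on; its price is the uniform-in-$t$ Egorov step.

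A hybrid removes that step. Write $u=Cu+(I-C)u$, where $C$ is microlocally the identity near $\WF(u)$ and $\Phi_t(\esssupp C)\cap\esssupp Q=\emptyset$ for $t\in\bar I$. Then $Q^*U(t)C$ has Schwartz kernel in $C^\infty(I\times M\times M)$ by the same wavefront composition, and $Q^*U(t)(I-C)u$ is evidently smooth in $t$.
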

 
And third:
\begin{proposition}[\cite{two}]\label{prop:wyman-symbol}
Let $b(x, \xi)$ be a smooth function for $\xi \neq 0$, homogeneous of degree $0$ in $\xi$. Define $b(x,D) \in \Psi_{\mathrm{cl}}^0(M)$ by
\[b(x,D)f(x) = \frac{1}{(2\pi)^n} \int_{\R^n} \int_{\R^n} e^{i\langle x-y, \xi \rangle} b(x, \xi)\, f(y)\, dy\, d\xi\]
for $x, y, \xi$ expressed locally. Then
\begin{align*}\sum_{\lambda_j \in [\lambda, \lambda+1]} \left| \int_H b\, e_j \, dV_H\right|^2
&\leq C \left(\int_{\R^d} \int_{S^{n-d-1}} |b(x', \omega)|^2 h(x')^2\, d\omega\, dx'\right) \lambda^{n-d-1} \\ &\; \; \; \; \;\ \;+ C_b\, \lambda^{n-d-2},
\end{align*}
where $C$ is independent of $b$ and $\lambda$, and $C_b$ is independent of $\lambda$ but depends on $b$.
\end{proposition}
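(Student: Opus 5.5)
The plan is to smooth the unit window, pass to the half-wave group, and compute the smoothed sum with a short-time parametrix. The leading coefficient will come out as an integral of $|b|^2$ over the conormal sphere bundle, and everything else is pushed into a $b$-dependent error one power of $\lambda$ lower.

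\textbf{Step 1 (smoothing and rewriting as a pairing).} Let $P=\sqrt{-\Delta_g}$ and let $\mu_H$ be the distribution $f\mapsto\int_H f\,dV_H$. Fix an even Schwartz function $\rho\ge 0$ with $\rho\ge 1$ on $[-1,1]$ and $\supp\hat\rho\subset(-\varepsilon,\varepsilon)$, where $\varepsilon$ is smaller than the injectivity radius. This is possible: take $\rho=|\chi|^2$ for a suitable $\chi$ with $\hat\chi\in C_c^\infty$, then rescale. Since the summands are nonnegative,
\[\sum_{\lambda_j\in[\lambda,\lambda+1]}\Big|\int_H b\,e_j\,dV_H\Big|^2\le \sum_j\rho(\lambda-\lambda_j)\,|\langle e_j,b^*\mu_H\rangle|^2 .\]
Inserting $\rho(\lambda-\lambda_j)=\frac1{2\pi}\int\hat\rho(t)e^{it(\lambda-\lambda_j)}\,dt$, the right side equals
\[\frac1{2\pi}\int\hat\rho(t)\,e^{it\lambda}\,\big\langle e^{-itP}\,b^*\mu_H,\;b^*\mu_H\big\rangle\,dt .\]
This is a pairing of the kernel of $b\,e^{-itP}\,b^*$ against $\mu_H\otimes\mu_H$, for $|t|<\varepsilon$ only.

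\textbf{Step 2 (parametrix and composition).} Work in local coordinates $x=(x',x'')$ with $H=\{x''=0\}$, so that $dV_H=h(x')\,dx'$. For $|t|<\varepsilon$, the Hörmander parametrix gives
\[e^{-itP}(x,y)=(2\pi)^{-n}\int e^{i(\varphi(x,y,\xi)-t|\xi|_{g(y)})}\,a(t,x,y,\xi)\,d\xi+\text{smooth},\]
with $\varphi=\langle x-y,\xi\rangle+O(|x-y|^2|\xi|)$ and $a\in S^0$, $a_0(0,x,x,\xi)=1$. Composing with $b$ on the left and $b^*$ on the right yields an operator of the same form whose amplitude has principal part $b(x,\xi)\,\overline{b(y,\xi)}\,a_0$. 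Restrict to $x''=y''=0$ and integrate in $x',y'$ against $h(x')h(y')$. Then apply stationary phase in the variables $(x'-y',\xi')$. This $2d$-dimensional, nondegenerate phase forces $\xi'=0$ and $x'=y'$, leaving
\[(2\pi)^{-(n-d)}\int_{\R^d}\int_{\R^{n-d}} e^{-it|\xi''|}\,|b(x',\xi'')|^2\,h(x')^2\,A(t,x',\xi'')\,d\xi''\,dx',\]
where $A=1+O(t)+O(|\xi|^{-1})$. The lower-order terms involve derivatives of $b$.

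\textbf{Step 3 (the $t$-integral).} Pass to polar coordinates $\xi''=r\omega$ and use homogeneity of $b$. The $t$-integral then becomes $\int\hat\rho(t)e^{it(\lambda-r)}\,dt=2\pi\rho(\lambda-r)$, and the $r$-integral gives
\[\int\rho(\lambda-r)\,r^{n-d-1}\,dr=\|\rho\|_{L^1}\lambda^{n-d-1}+O(\lambda^{n-d-2}).\]
Hence the main term is
\[C\lambda^{n-d-1}\int_{\R^d}\int_{S^{n-d-1}}|b(x',\omega)|^2h(x')^2\,d\omega\,dx',\]
with $C$ depending only on $\rho$, $n$, $d$ and the geometry, not on $b$. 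A partition of unity over finitely many charts covering $H$ is harmless, since cross terms between distant charts are $O(\lambda^{-\infty})$ by finite propagation speed.

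\textbf{Main obstacle.} The difficult part is the error bookkeeping. One must check that every remaining term — the $O(t)$ and $O(|\xi|^{-1})$ amplitude corrections, the stationary-phase remainders, the nonprincipal part of the symbol of $b^*$, and the smooth part of the parametrix — is $O(\lambda^{n-d-2})$. The implied constants may depend on finitely many seminorms of $b$; that dependence is exactly what is collected into $C_b$. I would handle this uniformly by noting that each such term carries an extra factor of $r^{-1}$ or of $t$. An extra $t$ becomes $\partial_r$ on $\rho(\lambda-r)$ after integrating by parts in $t$, and either way the $r$-integral loses one power of $\lambda$. One more point needs care: the symbols of $b$ restricted to the conormal directions must make sense near $\xi'=0$. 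I would verify this by keeping the $\xi'$-dependence inside the stationary-phase amplitude rather than freezing it early.
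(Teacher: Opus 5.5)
The paper does not prove this proposition; it quotes it from \cite{two}. Your route is the standard argument for such local bounds: majorize the window by $\rho\ge 0$ with $\supp\hat\rho\subset(-\varepsilon,\varepsilon)$, pass to $\langle e^{-itP}b^*\mu_H,b^*\mu_H\rangle$ for $|t|<\varepsilon$, and evaluate with the H\"ormander parametrix and stationary phase in the tangential variables. It does yield the inequality, with a $b$-independent constant in front of the $|b|^2$ integral and all $b$-dependence in $C_b\lambda^{n-d-2}$. Two steps, however, are stated incorrectly and need repair.

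\emph{The size of $\varepsilon$.} Taking $\varepsilon$ below the injectivity radius of $M$ is not enough. You need $\varepsilon$ below the normal injectivity radius of $H$, so that no geodesic of length $<\varepsilon$ leaves $H$ conormally and returns. By compactness you also need $\varepsilon$ small enough that points of $H$ at $M$-distance $<\varepsilon$ lie in a common chart where $H=\{x''=0\}$. For a counterexample, take a circle of radius $R$ in a large flat torus. Every inward conormal direction returns conormally at time $2R$, which may be far below the injectivity radius. If $2R<\varepsilon$, the cross terms between charts about $2R$ apart are neither ``distant'' nor negligible. The smoothed sum then generically contains an oscillating term of the full order $\lambda^{n-d-1}$, so your Steps 2--3 expansion fails. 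Relatedly, finite propagation speed holds for $\cos tP$, not $e^{-itP}$. Replace $e^{-itP}$ by $2\cos tP$, at the cost of $\sum_j\rho(\lambda+\lambda_j)|\langle e_j,b^*\mu_H\rangle|^2=O(\lambda^{-\infty})$.

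\emph{The reduced phase.} For $t\neq 0$ the critical point of the $(x',\xi')$ stationary phase is displaced: $x'-y'=O(t)$ and $\xi'=O(t|\xi''|)$. The critical value is $-t\,q(t,x',\xi'')$, with $q$ homogeneous of degree one and $q(0,x',\xi'')=|(0,\xi'')|_g$. In general, however, $q(t,\cdot)\neq q(0,\cdot)$. The resulting $O(t^2|\xi''|)$ discrepancy sits in the exponent and is not small when $|\xi''|\sim\lambda$. It is not an extra factor of $t$ or $r^{-1}$, so it cannot be absorbed into $A$. Fix this by substituting $s=r\,q(t,x',\omega)$ in the polar variables before the $t$-integration. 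The phase becomes exactly $-ts$, the Jacobian $q(t,x',\omega)^{-(n-d)}$ is smooth in $t$, and your bookkeeping then applies verbatim. This also shows that the weight multiplying $|b(x',\omega)|^2$ is $h(x')^2$ only up to a bounded positive factor depending on $g$. That is harmless, since $C$ may depend on the geometry.
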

 
\begin{proof}[Proof of Lemma~\ref{lem:second-cond}] 
Now,
\begin{align*}\sum_j   \int_H \int_H & \rho(\lambda_j - \lambda)\, e_j(x)\overline{e_j(y)}\, dV_H(x)\, dV_H(y)\\
&= \frac{1}{2\pi} \sum_j \int_{-\infty}^\infty \int_H \int_H \hat\rho(t)\, e^{it(\lambda_j - \lambda)} e_j(x)\overline{e_j(y)}\, dV_H(x)\, dV_H(y)\, dt\\
&=\frac{1}{2\pi} \sum_j \int_{-\infty}^\infty \int_H \int_H \hat\rho(t)\, e^{-it\lambda}\, e^{it\sqrt{-\Delta_g}} e_j(x)\overline{e_j(y)}\, dV_H(x)\, dV_H(y)\, dt.
\end{align*}
In view of Proposition~\ref{prop:bB}, denote the operator
\[X_{\lambda}(x,y) = \frac{1}{2\pi} \int_{-\infty}^\infty \hat{\rho}(t)\, e^{-it \lambda}\, e^{it \sqrt{-\Delta_g}}(x,y)\, dt.\]
We have
\begin{align*}
\int_H \int_H X_{\lambda}(x,y)\, dV_H(x)\, dV_H(y) = &\int_H\int_H B X_{\lambda}(x,y) B^*\, dV_H(x)\, dV_H(y) \\
&+\int_H \int_H B X_{\lambda}(x,y) b^*\, dV_H(x)\, dV_H(y)\\
&+ \int_H \int_H b X_{\lambda}(x,y) B^*\, dV_H(x)\, dV_H(y)\\
&+\int_H \int_H b X_{\lambda}(x,y) b^*\, dV_H(x)\, dV_H(y).
\end{align*}
 
We claim the first three terms are $O(\lambda^{-N})$ for arbitrary $N$. We prove the bound for the first term; the second and third are handled identically, since $X_\lambda$ is self-adjoint. Interpreting $V_H$ as a distribution on $M$, we write formally
\begin{align*}
    \int_H \int_H B X_\lambda B^*(x,y)\, dV_H(y)\, dV_H(x) &= \int_M \int_M  X_{\lambda}(x,y)\, B^*V_H(y)\, B^*V_H(x)\, dx\, dy\\
    &= \frac{1}{2\pi} \int_{-\infty}^\infty \bigg(\hat{\rho}(t)\,e^{-it\lambda}\\& \ \; \; \; \; \;\; \; \;\ \; \;* \int_M e^{it\sqrt{-\Delta_g}} (B^* V_H)(x)\, \overline{B^*V_H(x)}\, dx \bigg)\, dt.
\end{align*}
Thus by Proposition~\ref{prop:wf-smooth}, it suffices that
\[\WF\!\left(e^{it \sqrt{-\Delta_g}} B^*V_H\right) \cap \WF(B^* V_H) = \emptyset,\]
and the result follows via integration by parts. Now
\[\WF(B^* V_H) \subset \esssupp B \cap N^* H.\]
Suppose that $(x, \xi)$ is a unit covector in $\WF(B^* V_H)$. Then $\Phi_t(x,\xi) \notin SN^* H$ for any $\delta \leq t \leq T$. By propagation of singularities,
\[\WF\!\left(e^{it \sqrt{-\Delta_g}} B^* V_H\right) = \Phi_t\, \WF(B^* V_H),\]
and thus
\[\WF\!\left(e^{it \sqrt{-\Delta_g}} B^* V_H\right) \cap \WF(B^* V_H) = \emptyset \qquad \delta \leq t \leq T.\]
 
So consider
\[\int_H \int_H b X_{\lambda}(x,y) b^*\, dV_H(x)\, dV_H(y) = \sum_j \rho(\lambda - \lambda_j) \left|\int_H b(x,D) e_j(x)\, dV_H(x) \right|^2.\]
which is the only term we have not covered. 

Note that $\rho$ satisfies the bounds
\[|\rho(z)| \leq C_N(1+|z|)^{-N} \qquad \textrm{for } N = 1, 2, \dots.\]
We dominate $|\rho|$ by a step function $\sum_{k \in \Z} a_k\, \mathbf{1}_{[k, k+1]}$ satisfying $a_k = \sup_{[k, k+1]} |\rho|$. 
Applying Proposition~\ref{prop:wyman-symbol},
\begin{align*}
\sum_j \rho(\lambda - \lambda_j) \left|\int_H b(x,D) e_j\, dV_H \right|^2 &\leq C \sum_{k \in \Z} a_k \bigg(\epsilon\, (|\lambda + k| + 1)^{n-d-1}
\\&\;\;\;\;\;\;\;\;+ C_b(|\lambda + k| +1)^{n-d-2}\bigg)
\\&\leq \epsilon C\,\lambda^{n-d-1} + C_b\,\lambda^{n-d-2},
\end{align*}
which concludes the proof.
\end{proof}

\section{AI Acknowledgments}
Claude was used to generate some of the introduction, clean up the writing, and check MatLab code. All substantive results are the work of the author alone.

\section{Acknowledgments}
I'd like to thank my advisor Emmett Wyman for his insights and helping spot errors in the proof.
 
\printbibliography

\end{document}